\setlist[itemize]{topsep=0ex,itemsep=0ex,parsep=0ex}
\setlist[enumerate]{topsep=0ex,itemsep=0ex,parsep=0ex}
\crefname{lem}{Lemma}{Lemmas}
\crefname{thm}{Theorem}{Theorems}
\crefname{cor}{Corollary}{Corollaries}
\newcommand{\defn}[1]{\textcolor{Maroon}{\emph{#1}}}
\newcommand{\mathdefn}[1]{\textcolor{Maroon}{#1}}
\def\NAT@spacechar{~}
\renewcommand{\baselinestretch}{1.1}
\DeclarePairedDelimiter{\ceil}{\lceil}{\rceil}
\renewcommand{\epsilon}{\varepsilon}
\renewcommand{\emptyset}{\varnothing}
\renewcommand{\geq}{\geqslant}
\renewcommand{\leq}{\leqslant}
\DeclareMathOperator{\tw}{tw}
\newcommand{\RR}{\mathbb{R}}
\newcommand{\TT}{\mathcal{T}}
\renewcommand{\thefootnote}{\fnsymbol{footnote}}
\theoremstyle{plain}
\newtheorem{thm}{Theorem}
\newtheorem{lem}[thm]{Lemma}
\newtheorem{cor}[thm]{Corollary}
\crefname{obs}{Observation}{Observations}
\newtheorem*{lem*}{Lemma}
\theoremstyle{definition}
\newtheorem*{conj*}{Conjecture}
\begin{document}
\title{\bf\boldmath\fontsize{18pt}{20pt}\selectfont 
Tree-Decompositions with \\
Small Width, Spread, Order and Degree}

\author{David~R.~Wood\,\footnotemark[2]}

\maketitle

\begin{abstract}
Tree-decompositions of graphs are of fundamental importance in structural and algorithmic graph theory. The main property of tree-decompositions is the width (the maximum size of a bag minus 1). We show that every graph has a tree-decomposition with near-optimal width, where each vertex appears in few bags. In particular, every graph with treewidth $k$ has a tree-decomposition with width at most $14k+13$, where each vertex $v$ appears in at most $\deg(v)+1$ bags. This improves an exponential bound by Ding and Oporowski [1995] to linear, and establishes a conjecture of theirs in a strong sense. In a second result, we show that every graph with treewidth $k$ has a tree-decomposition with width at most $3k-1$, where on average each vertex appears in at most three bags.
\end{abstract}

\footnotetext[2]{School of Mathematics, Monash University, Melbourne, Australia (\texttt{david.wood@monash.edu}). Research supported by the Australian Research Council and by NSERC. }

\renewcommand{\thefootnote}{\arabic{footnote}}
\section{Introduction}

Tree-decompositions were introduced by \citet{RS-II}, as a key ingredient in their Graph Minor Theory. Indeed, the dichotomy between minor-closed classes with or without bounded treewidth is a central theme of their work. Tree-decompositions arise in several other results, such as the Erd\H{o}s-P\'osa theorem for planar minors~\cite{RS-V,CvBHJR19}, and  Reed's beautiful theorem on $k$-near bipartite graphs~\citep{Reed99a}. Tree-decompositions are also a key tool in algorithmic graph theory, since many NP-complete problems are solvable in linear time on graphs with bounded treewidth~\citep{Courcelle90}. 

For a non-empty tree $T$, a \defn{$T$-decomposition} of a graph\footnote{We consider simple undirected graphs $G$ with vertex set $V(G)$ and edge set $E(G)$. Let \defn{$\Delta(G)$} be the maximum degree of $G$. Let \defn{$\omega(G)$} be the number of vertices in a largest clique in  $G$. A graph $G$ is \defn{empty} if $V(G)=\emptyset$. } $G$ is a collection $(B_x:x \in V(T))$ such that:
\begin{itemize}
    \item $B_x\subseteq V(G)$ for each $x\in V(T)$ (each $B_x$ is called a \defn{bag}),
    \item for each edge ${vw \in E(G)}$, there is a node ${x \in V(T)}$ with ${v,w \in B_x}$, and
    \item for each vertex ${v \in V(G)}$, the set $\{ x \in V(T) : v \in B_x \}$ induces a non-empty (connected) subtree of $T$.
\end{itemize}
The \defn{width} of such a $T$-decomposition is ${\max\{ |B_x| : x \in V(T) \}-1}$. A \defn{tree-decomposition} is a $T$-decomposition for any tree $T$. The \defn{treewidth} of a graph $G$, denoted \defn{$\tw(G)$}, is the minimum width of a tree-decomposition of $G$. Treewidth\footnote{Equivalent notions to treewidth were introduced by \citet{BB72} and \citet{Halin76} prior to the work of Robertson and Seymour.} is the standard measure of how similar a graph is to a tree. Indeed, a connected graph has treewidth at most 1 if and only if it is a tree. See \citep{HW17,Bodlaender98,Reed97} for surveys on treewidth.

The main property of tree-decompositions is the width. However, much recent work has looked at other properties~\citep{Adler06}, including chromatic number of the bags~\citep{Seymour16,HK17,BFMMSTT19,HRWY21}, independence number of the bags~\citep{DMS21,DMS24a,DMS24b,DFGKM24,DKKMMSW24,MR22,AACHSV24}, diameter of the bags \citep{CDN16,Lokshtanov10,BS24,DG07}, and treewidth of the bags~\citep{LNW}. This paper studies three other properties of tree-decompositions. 

\subsubsection*{Spread}

\citet{DO95} introduced the following definition (motivated by connections to the congestion and dilation of graph embeddings). The \defn{spread} of a vertex $v$ in a tree-decomposition $(B_x:x\in V(T))$ is the number of nodes $x\in V(T)$ such that $v\in B_x$. If a vertex $v$ has spread $s$ in a tree-decomposition with width $k$, then $\deg(v)\leq sk$. So if $s$ is a constant, then the width must increase with the maximum degree. Conversely, \citet{BodEng-JAlg97} and \citet{DO95} independently showed that every graph with treewidth $k$ and maximum degree $\Delta$ has a tree-decomposition with width at most some function $f(k,\Delta)$, where every vertex has spread at most 2 (called a \defn{domino tree-decomposition}). The best known bound here is $f(k,\Delta)=(9k+7)\Delta(\Delta+1)-1$, due to  \citet{Bodlaender-DMTCS99}. 

To avoid dependence on maximum degree, our focus is on tree-decompositions where the spread of a vertex $v$ is allowed to depend on $\deg(v)$. 
However, \citet{DO95} showed that one cannot hope for tree-decompositions with optimal width and spread bounded even by a function of the maximum degree. In particular, they constructed, for any integer $n$, a graph $G$ with treewidth 3 and maximum degree 4, such that in every tree-decomposition of $G$ with width 3 there is a vertex with spread $n$. This says that for spread to be bounded by a function of the degree, one must allow for tree-decompositions with near-optimal width. \citet{DO95} showed this is possible (albeit with large dependence on the treewidth). 

\begin{thm}[\citep{DO95}] 
\label{DO}
Every graph $G$ with treewidth $k$ has a tree-decomposition with width at most $2^{k+1}(k+1)-1$, such that each vertex $v\in V(G)$ has spread at most $2\cdot3^{2^k}\deg_G(v)+1$.
\end{thm}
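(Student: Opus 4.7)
The plan is to prove Theorem~\ref{DO} by induction on $k=\tw(G)$, since the doubly-exponential shape of both bounds — width $2^{k+1}(k+1)-1$ and spread $2\cdot 3^{2^k}\deg_G(v)+1$ — strongly suggests recurrences $w(k)\approx 2w(k-1)+O(k)$ and $s(k)\approx s(k-1)^2$ that compound across $k$ recursive levels. The base case $k=0$ is immediate: $G$ is edgeless, and placing each vertex into its own singleton bag on a one-node tree gives width $0 \leq 2-1$ and spread $1$ for every vertex, matching $2\cdot 3\cdot 0+1$.

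For the inductive step, I would start with a width-$k$ tree-decomposition $(B_x:x\in V(T))$ of $G$, root $T$, and pick a centroid-style node $r$ so that $B_r$ acts as a balanced separator of size at most $k+1$. Setting $S:=B_r$, the components $T_1,\ldots,T_m$ of $T-r$ induce subgraphs $G_i\subseteq G$, each still of treewidth at most $k$, but after contracting or deleting an appropriately chosen vertex of $S$ — exploiting that $S$ lies in a single bag and can be treated as a clique — the residual graph has treewidth at most $k-1$. Applying the inductive hypothesis to each residual piece and then reinserting the removed vertices of $S$ into the relevant bags produces a tree-decomposition of $G$ by gluing the pieces along a common bag containing $S$. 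Since the re-insertion adds at most $|S|\leq k+1$ vertices to some bags and the child decompositions already have width at most $2^{k}k-1$, the recurrence $w(k)\leq 2w(k-1)+k+2$ yields the claimed $w(k)\leq 2^{k+1}(k+1)-1$.

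The main obstacle, and the technically delicate part, is controlling the spread. Reinserting a separator vertex $v\in S$ into \emph{every} bag of a child decomposition would blow up its spread with the total number of bags, destroying the $O(\deg_G(v))$ scaling. The fix is to insert $v$ only into those bags of the child decomposition whose content meets $N_G(v)$, and then take the connected closure of those bags in the child tree (so that the subtree condition is preserved). Bounding the size of this closure is where the spread recurrence comes from: each neighbor of $v$ already lives in at most $s(k-1)=2\cdot 3^{2^{k-1}}\deg_{G_i}(u)$ bags of the child decomposition, and closing up connectivity across the centroid gluing contributes another factor of roughly $3^{2^{k-1}}$, giving $s(k)\leq 3^{2^{k-1}}\cdot s(k-1)$ and hence $s(k)\leq 2\cdot 3^{2^k}\deg_G(v)+1$ after the additive $+1$ from $v$'s appearance in the top-level bag. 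The bookkeeping that turns "number of bags touching $N_G(v)$" into exactly this multiplicative factor at each level is where essentially all of the technical work lies.
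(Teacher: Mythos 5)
This theorem is not proved in the paper; it is quoted directly from Ding and Oporowski \citep{DO95}, and improving it (Theorem~\ref{Spread}) is the paper's main point. So there is no paper proof to compare your attempt against. Evaluated on its own terms, however, the sketch has concrete gaps that keep it from being a proof.

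The central problem is the claim that "after contracting or deleting an appropriately chosen vertex of $S$ \dots the residual graph has treewidth at most $k-1$." Deleting one vertex from a graph of treewidth $k$ does not in general reduce the treewidth. Restricting the original decomposition to a piece $G_i$ and removing a vertex $v\in B_r$ from its bags yields a decomposition of $G_i-v$ of width \emph{at most} $k$, not $k-1$; the width drops only if $v$ sits in every bag of maximum size, which nothing in your construction guarantees. Without this drop, the induction on $k$ never closes. Relatedly, the width recurrence $w(k)\le 2w(k-1)+k+2$ contains a factor of $2$ that is never produced by the construction: reinserting $|S|\le k+1$ vertices into some bags gives an \emph{additive} $+(k+1)$ term, not a doubling, so the recurrence is reverse-engineered from the target bound rather than derived.

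The spread analysis is where essentially all of the theorem's difficulty lives, and it is precisely the part you wave through. You assert that "closing up connectivity across the centroid gluing contributes another factor of roughly $3^{2^{k-1}}$" and then defer the justifying bookkeeping. But there is no reason given that the connected closure (in the child tree) of the bags meeting $N_G(v)$ has size a bounded \emph{multiple} of the number of such bags, rather than the entire subtree spanned by them — whose size is not controlled by $\deg(v)$ at all. Showing that each recursive step preserves a spread bound that is \emph{linear} in $\deg(v)$ is exactly what must be proved, and it is absent. Finally, the base case as stated — "each vertex into its own singleton bag on a one-node tree" — is self-contradictory; a one-node tree carries a single bag, so you need a path (or any tree) with one node per vertex.
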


\citet{DO95} conjectured that the bound on the spread in \cref{DO} can be improved to only depend on $\deg_G(v)$. We establish this conjecture, with much better dependence on $k$ in the bound on the width.

\begin{thm}
\label{Spread}
Every graph $G$ with treewidth $k$ has a tree-decomposition with width at most $14k+13$, such that each vertex $v\in V(G)$ has spread at most $\deg_G(v)+1$.
\end{thm}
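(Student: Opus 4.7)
My plan is to begin with an arbitrary tree-decomposition $(B_x : x \in V(T))$ of $G$ of width $k$, and modify it by a contraction of $T$ that forces each vertex's spread down to $\deg_G(v)+1$ while only losing a constant factor in the width.

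After rooting $T$ at an arbitrary node $r$, for each $v\in V(G)$ let $x(v)$ denote the topmost node of the subtree $T_v$, and partition $V(G)$ via $A_x := \{v : x(v) = x\}$. Set
\[
R_v := \{x(v)\} \cup \{x(w) : w \in N_G(v),\ x(w)\text{ is a proper descendant of }x(v)\},
\]
noting $|R_v| \le \deg_G(v)+1$. Let $S_v$ be the minimal subtree of $T$ spanning $R_v$. I claim $S_v$ is enough to host $v$: if a neighbor $w$ of $v$ satisfies $x(w)$ above $x(v)$, then $x(v) \in R_w \subseteq V(S_w)$, so $v$ and $w$ will still meet at (the super-node containing) $x(v)$ after contraction.

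I would then coarsen $T$: define super-nodes of $T$ by merging adjacent nodes along a rule chosen so that every degree-two passthrough node of every $S_v$ ends up in the same super-node as some $R_v$-element. A natural starting point is to contract every edge $xy$ with $A_y = \emptyset$ (making $y$ a pure passthrough for any $S_v$ containing it), and then iteratively merge further runs of nodes that are passthroughs for all of the $S_v$'s they lie on. The resulting quotient tree $T'$, with super-bags $B'_z := \bigcup_{x \in z} B_x$, is a valid tree-decomposition, and for each $v$ the image of $S_v$ in $T'$ has at most $|R_v| \le \deg_G(v)+1$ super-nodes, establishing the spread bound.

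The principal obstacle is bounding the new width $|B'_z| \le 14k + 14$ uniformly. A vertex $u \in B'_z$ is either an \emph{anchor} (so $x(u) \in z$, contributing to $\sum_{x \in z}|A_x|$) or a \emph{transient} (so $S_u$ passes through $z$ as an internal super-node). Anchors are controlled because distinct $x$'s in $z$ contribute disjoint $A_x$'s and each is bounded by the original width $k+1$. Transients must enter and exit $z$ across boundary super-edges, so their total number is bounded by the vertex-load on the boundary bags. Since the boundary of a super-node in a tree can have unbounded degree, deriving the specific constant $14$ will almost certainly require a further structural refinement — for instance, splitting any super-node whose bag would exceed the width budget along a small separator, at the price of a slight spread increase that can then be reabsorbed, or preprocessing $T$ so that each super-node has bounded boundary. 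I anticipate the heart of the proof to be a delicate amortized charging argument simultaneously reconciling the spread and width constraints, with the constant $14$ emerging from balancing these two contributions.
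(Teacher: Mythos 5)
Your plan is a genuinely different route from the paper's: you propose to start from an arbitrary width-$k$ tree-decomposition and tame the spread by restricting each vertex $v$ to a Steiner tree $S_v$ and then contracting the host tree $T$ so that the image of each $S_v$ is small. The paper does not modify an existing decomposition at all; it builds a new one from scratch by recursing on balanced separators (\cref{TreewidthSetSep} and \cref{PseudoComponents}), maintaining an invariant (``slick'') which automatically forces spread at most $\deg_G(v)+1$ via \cref{SlickSpread}.

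Unfortunately, the obstacle you flag --- bounding the width of the contracted decomposition --- is not a technical loose end but a genuine and, in this form, fatal gap. Consider the paper's own example: the cycle $C_n=(v_1,\dots,v_n)$ with the tree-decomposition $T$ equal to a path on nodes $x_2,\dots,x_{n-1}$ and bags $B_{x_i}=\{v_1,v_i,v_{i+1}\}$, of width $2$. Root $T$ at $x_2$. Then $x(v_1)=x_2$, $N_G(v_1)=\{v_2,v_n\}$, $x(v_n)=x_{n-1}$, so $S_{v_1}$ is the entire path, and after the pruning step $v_1$ still has spread $n-2$. Any contraction of $T$ into super-nodes that reduces $v_1$'s spread to at most $3$ must merge the $n-2$ path nodes into at most $3$ super-nodes, so some super-node $z$ contains $\Omega(n)$ consecutive $x_i$'s; since $v_{i+1}\in B_{x_i}$ and all these vertices are distinct, $|B'_z|=\Omega(n)$, far exceeding $14k+13=41$. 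The tension you foresee between spread and width is therefore not resolvable by any amortized charging argument in this framework: starting from a tree-decomposition with a high-spread vertex, local contractions of $T$ cannot simultaneously keep the width linear in $k$ and the spread bounded by $\deg+1$.

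What the paper buys by abandoning the ``contract a given decomposition'' strategy is exactly the freedom to place vertices in bags guided by the graph's structure rather than by a fixed host tree. In \cref{AlphaBeta}, the recursion keeps a small interface set $R$, uses a balanced separator (\cref{TreewidthSetSep}) and a component-grouping step (\cref{PseudoComponents}) to split $G$, and crucially \emph{drops} any vertex of $R$ with no private neighbour in the next piece --- that is the slick condition --- before recursing. This pruning is what bounds the spread without a width penalty, and it has no analogue in a tree-contraction scheme. If you want to salvage your approach, you would need to replace the starting decomposition with one already having bounded spread, which is circular; the separator-based construction is the way to break that circularity.
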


We now illustrate this result with an  example. Let $G$ be the $n\times n$ grid graph. Let $(v_1,\dots,v_{n^2})$ be the ordering of $V(G)$ consisting of the first row, followed by the second row, followed by the third row, etc. 
Let $B_i:= \{v_i,\dots,v_{n+i}\}$ for $i\in\{1,\dots,n^2-n\}$. It is easily seen that $(B_1,\dots,B_{n^2-n})$ is a path-decomposition of $G$ with width $n$. So $\tw(G)\leq n$. In fact, $\tw(G)=n$ for $n\geq 2$ (proved via treewidth--bramble duality~\citep{ST93}). On the other hand, if $C_i$ is the union of the $i$-th row and the $(i+1)$-th row of $G$, then it is easily seen that $(C_1,\dots,C_{n-1})$ is a path-decomposition of $G$ with width $2n-1$. The first path-decomposition has optimal width and maximum spread $n+1$, whereas the second path-decomposition has near-optimal width and maximum spread 2. \cref{Spread} says that analogous behaviour holds for every graph.

Spread is naturally interpreted in terms of minors. It is well known that if $(B_x:x\in V(T))$ is a tree-decomposition of a graph $G$ with width $k$, then $G$ is a minor of the strong product $T\boxtimes K_{k+1}$ (see \citep{HW17}). Moreover, the number of vertices in the branch set representing $v\in V(G)$ equals the spread of $v$ in $(B_x:x\in V(T))$. \cref{Spread} thus implies:

\begin{cor}
Any graph $G$ with treewidth $k$ is a minor of $T\boxtimes K_{14k+14}$ for some tree $T$, where the branch set representing a vertex $v\in V(G)$ has at most $\deg_G(v)+1$ vertices.
\end{cor}

\cref{Spread} also has an attractive consequence for chordal completions. A graph $G$ is \defn{chordal} if $G$ has no induced cycle of length at least 4. A \defn{chordal completion} (also called \defn{triangulation}) of a graph $G$ is a chordal graph $G'$ such that $G$ is a spanning subgraph of $G'$. There is a large literature on chordal completions; see the survey by \citet{Heggernes06}.
A graph is chordal if and only if it has a tree-decomposition in which each bag is a clique (see \citep[Proposition~12.3.6]{Diestel5}). It follows from the Helly property that $\tw(Q)=\omega(Q)-1$ for every chordal graph $Q$. Say $(B_x:x\in V(T))$ is the tree-decomposition of a graph $G$ from \cref{Spread}, and $G'$ is obtained from $G$ by adding edges so that $B_x$ is a clique for each $x\in V(T)$. So $(B_x:x\in V(T))$ is a tree-decomposition of $G'$, and $G'$ is chordal. The next result follows.

\begin{cor}
\label{Completion}
Any graph $G$ with treewidth $k$ has a chordal completion $G'$ such that $\omega(G')=\tw(G')+1\leq 14k+14$, and each vertex $v\in V(G')=V(G)$ satisfies $\deg_{G'}(v)\leq \linebreak (14k+13)(1+\deg_G(v))$. 
\end{cor}

\subsubsection*{Order}

The second property of tree-decompositions that we consider is the number of bags. Define the \defn{order} of a tree-decomposition $(B_x:x\in V(T))$  to be $|V(T)|$. It is folklore that every graph $G$ with treewidth $k$ has a tree-decomposition with width $k$ and order $|V(G)|-k$ (see \citep{DvoWoo} for a proof). Every tree-decomposition of a graph $G$ with width $k$ has order at least $\frac{|V(G)|}{k+1}$. We show that this lower bound can be achieved within a small constant factor.

\begin{thm}
\label{Small}
For any graph $G$ and integer $k\geq\max\{\tw(G),1\}$, there is a tree-decomposition of $G$ with width at most $3k-1$ and order at most $\max\{\frac{|V(G)|}{k}-1,1\}$. 
\end{thm}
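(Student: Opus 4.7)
The plan is to proceed by strong induction on $|V(G)|$, establishing the slightly stronger statement that for every graph $G$ with $\tw(G) \le k$ and every $W \subseteq V(G)$ with $|W| \le 2k$, $G$ admits a tree-decomposition of width at most $3k - 1$ and order at most $\max\{|V(G)|/k - 1, 1\}$ in which $W$ is contained in a single bag; the theorem is the special case $W = \varnothing$. The base case $|V(G)| \le 3k$ is immediate: the single bag $V(G)$ contains $W$, has size at most $3k$, and the order $1$ satisfies the bound.

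For the inductive step with $n := |V(G)| > 3k$, the strategy is to peel off a chunk of exactly $k$ vertices. The key sublemma I would need is: there exists $U \subseteq V(G) \setminus W$ with $|U| = k$ satisfying $|W \cup (N_G(U) \setminus U)| \le 2k$. Granting this, define $B := W \cup U \cup (N_G(U) \setminus U)$; since $U$ and $W \cup (N_G(U) \setminus U)$ are disjoint, $|B| \le k + 2k = 3k$. Let $G' := G - U$ and $W' := W \cup (N_G(U) \setminus U) \subseteq V(G')$; then $\tw(G') \le k$, $|V(G')| = n - k$, and $|W'| \le 2k$, so the inductive hypothesis applied to $(G', W')$ produces a tree-decomposition of $G'$ of width at most $3k - 1$ and order at most $(n-k)/k - 1 = n/k - 2$ in which $W'$ lies inside a single bag $B'$. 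Attaching $B$ as a new leaf adjacent to $B'$ is valid because $B \cap V(G') = W' \subseteq B'$ guarantees the vertex-subtree property, and it yields a tree-decomposition of $G$ of width at most $3k - 1$, order at most $n/k - 1$, and with $W \subseteq B$, as required.

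The main obstacle is proving the peeling sublemma. I would tackle it using a width-$k$ tree-decomposition $(B_x : x \in V(T))$ of $G$ in which $W$ is contained in a single bag $B_{x_W}$, looking for a connected subtree $T' \subseteq T$ whose set of exclusive vertices $U := \bigl(\bigcup_{x \in T'} B_x\bigr) \setminus \bigl(\bigcup_{x \in T \setminus T'} B_x\bigr)$ has size exactly $k$ and whose adhesion $S := \bigl(\bigcup_{x \in T'} B_x\bigr) \cap \bigl(\bigcup_{x \in T \setminus T'} B_x\bigr)$ is compatible with $W$ in the sense that $|W \cup S| \le 2k$. In a smooth tree-decomposition of a $k$-tree completion of $G$, a pendant subtree with $k$ nodes automatically has $|U| = |S| = k$; situating $T'$ on the side of $T$ opposite $x_W$ forces $S$ to overlap $W$ enough to respect the budget $2k - |W|$. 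The delicate cases are when the decomposition tree has no pendant subtree of size $k$ (e.g., when $T$ is star-like, corresponding to ``book''-style $G$), where I would instead choose $T'$ as a bundle of several pendant leaves sharing a common adhesion; a careful case analysis based on the degree structure of $T$, combined with the freedom to reselect the underlying decomposition, should handle these.
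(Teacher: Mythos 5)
Your top-level structure (peel off $k$ vertices at a time, track a boundary set $W$ of size at most $2k$) is in the right spirit, but the peeling sublemma that carries your induction is false as stated, and the sketch you give for it is internally inconsistent with your own hypotheses.

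Take $G = P_n$ (a path $v_1 \cdots v_n$ with $n \geq 5$), $k=1$, and $W = \{v_1, v_n\}$, which satisfies $\tw(G) \le k$ and $|W| \le 2k$. Any $U \subseteq V(G)\setminus W$ with $|U|=1$ is $\{v_i\}$ for some $1<i<n$, and then $N_G(U)\setminus U = \{v_{i-1}, v_{i+1}\}$, so $|W \cup (N_G(U)\setminus U)| \geq 3 > 2 = 2k$ unless $\{v_{i-1},v_{i+1}\}=\{v_1,v_n\}$, which forces $n=3$. So the sublemma fails for this admissible $(G,W)$. You might object that this particular $W$ is not reachable from $W=\varnothing$ under your recursion, but your inductive hypothesis quantifies over \emph{all} $W$ of size at most $2k$, so the hypothesis is genuinely stronger than what your argument can deliver. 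The underlying reason is visible in your own sketch: you propose to prove the sublemma using a width-$k$ tree-decomposition in which $W$ lies inside a single bag, which already forces $|W| \le k+1$; but after one peel $W'=W\cup(N_G(U)\setminus U)$ can have size up to $2k$, and there is no reason for $W'$ to sit inside a bag of \emph{any} width-$k$ tree-decomposition of $G'=G-U$ (doing so would require $\tw(G'+K_{W'}) \le k$, which can fail). So the invariant you would need does not propagate across the recursion, and ``reselecting the underlying decomposition'' cannot fix it because the obstruction is a treewidth obstruction, not a choice-of-decomposition obstruction.

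The paper sidesteps all of this by never re-deriving a decomposition of the peeled graph. It fixes once and for all a smooth width-$k$ tree-decomposition $(B_x : x\in V(T))$ with $|V(T)|=|V(G)|-k$ and $|B_x\setminus B_y|=|B_y\setminus B_x|=1$ along each edge $xy$, so that each non-root node introduces exactly one new vertex $v_x$. It then divides $T$ (via \cref{FindSubtree,PartitionTree}) into edge-disjoint subtrees $T_1,\dots,T_m$ of sizes in $\{k+1,\dots,2k\}$ that overlap only at roots, and takes the quotient (\cref{QuotientTreeDecomp}). Each quotient bag $C_i$ is $B_{r_i}$ together with the at most $|V(T_i)|-1 \le 2k-1$ introduced vertices $v_x$, giving $|C_i| \le 3k$. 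Because the peeling happens entirely on the decomposition tree rather than on the graph, the adhesion you have to carry forward is always exactly one bag $B_{r_i}$ of the original decomposition, of size at most $k+1$, and the size bookkeeping closes. If you want to rescue your approach, the right move is to make your induction hypothesis "$(B_x:x\in V(T))$ is a fixed smooth width-$k$ tree-decomposition rooted at $r$, and $W = B_r$" rather than "$W$ is an arbitrary set of size at most $2k$," and to peel a pendant subtree of $T$ with between $k+1$ and $2k$ nodes (not exactly $k$ graph vertices) — at which point you have essentially reconstructed \cref{FindSubtree,PartitionTree,QuotientTreeDecomp}.
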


Note that in \cref{Small}, the total size of the bags is less than $3|V(G)|$ (assuming $|V(G)|\geq k$). That is, the average spread of a vertex is less than 3. 

\cref{Small} is reminiscent of the folklore result saying that every $k$-colourable graph $G$ is $(2k-1)$-colourable with at most $\ceil{\frac{|V(G)|}{k}}$ vertices in each colour class (see \citep{PTT99} for example). 

The proofs of \cref{Spread,Small} can be combined to give a tree-decomposition with both small spread and small order.

\begin{thm}
\label{SpreadAndSmall}
For any graph $G$ and integer $k\geq\tw(G)$,  $G$ has a tree-decomposition with width at most $56k+53$ and order at most $\max\{\frac{|V(G)|}{14k+14},1\}$, such that each vertex $v\in V(G)$ has spread at most $\deg_G(v)+1$.
\end{thm}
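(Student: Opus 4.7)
The plan is to apply \cref{Spread} first and then compress the resulting tree-decomposition by grouping contiguous bags into ``chunks,'' in the spirit of the compression used for \cref{Small}. Set $W := 14k+14$. If $|V(G)| < W$ the single-bag decomposition trivially works, so assume $|V(G)| \geq W$. Apply \cref{Spread} to obtain a tree-decomposition $(B_x : x \in V(T))$ of $G$ with each bag of size at most $W$ and with every vertex $v$ having spread at most $\deg_G(v)+1$.

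Root $T$ at an arbitrary node $r$, and for each node $x$ let $N_x := B_x \setminus B_{\mathrm{parent}(x)}$ (with $N_r := B_r$). By the connected-subtree property the sets $\{N_x\}$ partition $V(G)$, and each $|N_x| \leq W$. A standard bottom-up DFS then partitions $V(T)$ into connected subtrees $C_1,\dots,C_s$ whose chunk-weights $w(C_i) := \sum_{x\in C_i}|N_x|$ each lie in $[W,\,3W+3]$; since $\sum_i w(C_i) = |V(G)|$, this gives $s \leq |V(G)|/(14k+14)$. Set $B'_i := \bigcup_{x \in C_i} B_x$ and let $T'$ be the tree whose nodes are the chunks, with edges inherited from $T$. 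Then $(B'_i : i \in V(T'))$ is a valid tree-decomposition of $G$: edges are still covered, and each vertex's support in $T'$ is a contraction of its connected subtree in $T$, hence still connected. Merging bags only reduces spread, so each $v$ still has spread at most $\deg_G(v)+1$.

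To bound $|B'_i|$ let $r_i$ be the topmost node of $C_i$. For any $x \in C_i\setminus\{r_i\}$ the parent of $x$ also lies in $C_i$, and if $v \in B_x \cap B_{r_i}$ then $v$'s connected support in $T$ contains $\mathrm{parent}(x)$, so $v \in B_{\mathrm{parent}(x)}$ and hence $v \notin N_x$. This gives $B'_i \subseteq B_{r_i} \cup \bigcup_{x\in C_i} N_x$, whence $|B'_i| \leq W + w(C_i) \leq 4W+3 = 56k+59$, i.e.\ width at most $56k+58$, completing the proof modulo the partition lemma.

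The main obstacle is the weighted tree-partition lemma itself. It can be produced by a standard bottom-up DFS on $T$: maintain a pending subtree at each node, close it off as a chunk whenever its accumulated weight first reaches $W$, and if a light leftover of weight $<W$ remains at the root absorb it into an adjacent chunk. Since each $|N_x| \leq W$, each closed chunk has weight at most $2W-1$ before absorption, and the at-most-one absorption step inflates a single chunk's weight by at most a further $W-1$, keeping all chunk-weights in $[W,3W+3]$ as required.
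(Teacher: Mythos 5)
Your merging argument is correct and is actually a simplification over the paper's route: since you form $B'_i := \bigcup_{x\in C_i} B_x$ for a \emph{partition} of $V(T)$ into connected subtrees, the support of each vertex in $T'$ is a quotient of its connected support in $T$, and hence spread can only decrease. The paper does not use this observation; it instead tracks the \emph{slick} property through the compression (\cref{MakeSmall}) and re-derives the spread bound via \cref{SlickSpread}. Your bag-inclusion $B'_i \subseteq B_{r_i}\cup\bigcup_{x\in C_i}N_x$ and the resulting width calculation are also fine.

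The gap is the chunking lemma itself. You claim $V(T)$ can always be partitioned into \emph{vertex-disjoint} connected subtrees $C_1,\dots,C_s$ with chunk-weights in $[W,3W+3]$, but this is false: let $T$ be a star with center $r$ and leaves $l_1,\dots,l_n$, with $|N_r|=1$ and $|N_{l_i}|=W-1$. The only connected subtrees disjoint from $r$ are singleton leaves of weight $W-1<W$, so every valid partition must put all of $T$ into a single part, whose weight $1+n(W-1)$ is unbounded. Your DFS description hides this: when you close the chunk $\{r,l_1\}$, the remaining leaves $l_2,\dots,l_n$ become orphaned pending subtrees, each too light, and there is no bounded number of absorptions that fixes this. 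The ``each closed chunk has weight at most $2W-1$'' claim only holds when a chunk grows by one node per step; at a high-degree node the pending weight jumps by the sum of all children's pendings at once. Bounding $\Delta(T)$ via \cref{SpreadDegree} mitigates but does not rescue the stated constants (the pending weight at a degree-$6$ node can reach roughly $7W$). The paper avoids this entirely by using \emph{divisions} (\cref{PartitionTree}, \cref{PartitionWeightedTree}) in which consecutive subtrees overlap at a shared root node, so a high-degree node can serve as the root of many pieces; the quotient then excludes the shared root's bag from all but one of the pieces (\cref{QuotientTreeDecomp}). If you replace your partition with a division and keep your spread-via-merging observation, the argument goes through and in fact sidesteps the paper's reliance on slickness in this step.
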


We emphasise that treewidth is not only of interest when it is bounded. For example, it follows from the Lipton-Tarjan separator theorem that every $n$-vertex planar graph has treewidth $O(\sqrt{n})$ (see \citep{DMW17} for a direct proof). \cref{SpreadAndSmall} implies that every such graph has a tree-decomposition with width $O(\sqrt{n})$ and order $O(\sqrt{n})$, such that each vertex $v$ has spread at most $\deg(v)+1$. More generally, \citet{AST90} showed that every $n$-vertex $K_t$-minor-free graph has treewidth at most $t^{3/2}\sqrt{n}$. \cref{SpreadAndSmall} implies that every such graph has a tree-decomposition with width $O(t^{3/2}\sqrt{n})$ and order $O(\sqrt{n}/t^{3/2})$, such that each vertex $v$ has spread at most $\deg(v)+1$. Nothing like these results are possible from \cref{DO}, because of the large dependence on $k$. 

\subsubsection*{Degree}

Define the \defn{degree} of a tree-decomposition $(B_x:x\in V(T))$ to be the maximum degree of $T$. It is well-known that every graph with treewidth $k$ has a tree-decomposition with width $k$ and degree 3. To see this, starting from a tree-decomposition with width $k$, replace each node $x\in V(T)$ by a path $P$ on $\deg_T(x)$ vertices, copy the original bag at $x$ to each node of $P$, and make each node of $P$ adjacent to exactly one of the neighbours of $x$ in $T$. This operation does not maintain small  spread. Nevertheless, the proof of \cref{Spread} is easily adapted to bound the degree with no increase in the width or spread. 

\begin{thm}
\label{SpreadDegree}
Every graph $G$ with treewidth $k$ has a tree-decomposition with width at most $14k+13$ and degree at most $6$, such that each vertex $v\in V(G)$ has spread at most $\deg_G(v)+1$.
\end{thm}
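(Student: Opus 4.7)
The plan is to adapt the construction used in the proof of Theorem~\ref{Spread} so that the tree $T$ indexing the decomposition has maximum degree at most $6$, without changing either the width bound $14k+13$ or the spread bound $\deg_G(v)+1$.

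I expect the proof of Theorem~\ref{Spread} to build $T$ by a recursive (or bottom-up) procedure in which, at each stage, a ``root'' bag of size at most $14k+14$ is introduced, with several child subtrees attached corresponding to the pieces of $G$ outside the root bag. First I would inspect this procedure and check whether the branching at each stage is already at most $5$: if so, $\Delta(T) \le 6$ is immediate and both the width and spread bounds of Theorem~\ref{Spread} carry over verbatim.

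If some stage produces more than $5$ children, the second part of the plan is to regroup them using an auxiliary tree of maximum degree at most $5$ (for example a rooted ternary tree) whose leaves are identified with the original children $y_1,\dots,y_d$. For each new internal node $z$ of this auxiliary tree, set $B_z$ to be the set of vertices $v \in B_x$ whose vertex-subtree $T_v$ must pass through $z$ in order to remain connected --- equivalently, vertices $v$ having appearances in original subtrees on both sides of $z$. This forces $B_z \subseteq B_x$, so the width bound is preserved, and every new appearance of $v$ lies on the unique path in the new tree between existing appearances of $v$, so $T_v$ remains connected.

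The main obstacle is verifying that the spread bound $\deg_G(v)+1$ still holds after regrouping: na\"ively, a vertex $v \in B_x$ lying in two far-apart children could be forced onto many new internal nodes, inflating its spread. The remedy is to order the leaves of the auxiliary tree so that for each $v \in B_x$ the children containing $v$ are clustered together, making $v$'s new appearances a short contiguous path. Since $|B_x| \le 14k+14$ and each $v \in B_x$ already has spread at most $\deg_G(v)+1$ in the decomposition produced by Theorem~\ref{Spread}, such an ordering exists and can be obtained by a small greedy or interval-graph argument applied locally at $x$. Combined with Theorem~\ref{Spread}, this yields the tree-decomposition with width at most $14k+13$, spread at most $\deg_G(v)+1$, and $\Delta(T) \le 6$.
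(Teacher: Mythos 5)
Your first contingency is exactly what happens in the paper: the recursive construction that yields Theorem~\ref{Spread} (carried out in Lemma~\ref{AlphaBeta}, stated as Lemma~\ref{SpreadLemma}) already produces a tree of bounded degree as a byproduct. At each recursive step the current root gets at most $m\le 3+\lceil 4\ell/\myalpha\rceil$ children, so a node has degree at most $4+\lceil 4\ell/\myalpha\rceil$; with the parameter choice $\myalpha=2\ell$ made in Theorem~\ref{SlickMain} this is $6$. So Theorem~\ref{SpreadDegree} and Theorem~\ref{Spread} come from the one construction, with the degree bound free, and your ``if so, $\Delta(T)\le 6$ is immediate'' branch is the correct route.

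The fallback (regrouping via an auxiliary low-degree tree) would not work, and it is worth seeing why. Fix a node $x$ with children $y_1,\dots,y_d$, and for $v\in B_x$ let $J(v)\subseteq\{1,\dots,d\}$ be the indices of children whose subtree contains $v$. You want a leaf ordering under which each $J(v)$ is an interval. Such an ordering need not exist: this is the consecutive-ones property, which already fails for the three sets $\{1,2\},\{2,3\},\{1,3\}$, and nothing in the construction rules out such a configuration among the up to $14k+14$ vertices of $B_x$. Moreover, even when an ordering exists, passing from the single node $x$ to a bounded-degree auxiliary tree inevitably inflates spread: on a caterpillar whose leaves are $y_1,\dots,y_d$ in order, a vertex $v$ with $|J(v)|=j$ must appear on a path spanning those $j$ leaves (plus the stretch back to the new root), so its spread increases by at least $j-1$, breaking the target bound $\deg_G(v)+1$ whenever $j\ge 2$; a balanced auxiliary tree costs roughly $\log d$ extra bags per branch instead, which is also too much. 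The clean way to get small degree is to bound the branching at the source, which is what Lemma~\ref{AlphaBeta} does via the pseudo-component grouping of Lemma~\ref{PseudoComponents}.
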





The paper is organised as follows. \cref{Separators} presents results about balanced separators that underpin the main proofs. The proofs of \cref{Spread,SpreadDegree} are presented in \cref{SmallSpread}. 
A key tool here is the notion of a `slick' tree-decomposition, which is of independent interest. \cref{Small} is proved in \cref{SmallOrder}. 

\section{Balanced Separators}
\label{Separators}

This section provides a series of results about balanced separators in graphs of given treewidth. 
We start with the following classical lemma of \citet{RS-II}.


\begin{lem}[{\protect\citep[(2.5)]{RS-II}}]
\label{RS-BasicSeparator}
For any graph $G$ with treewidth at most $k$, for any set $S\subseteq V(G)$, there is a set $X$ of at most $k+1$ vertices in $G$ such that each component of $G-X$ has at most $\frac{|S\setminus X|}{2}$ vertices in $S$. 
\end{lem}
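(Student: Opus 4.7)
The plan is to choose $X$ as a bag of a tree-decomposition $(B_z : z\in V(T))$ of $G$ of width at most $k$, which exists by hypothesis; then $|X|\leq k+1$ is automatic. For each $x\in V(T)$ and each component $T_i$ of $T-x$, set $U_i := \bigl(\bigcup_{z\in V(T_i)}B_z\bigr)\setminus B_x$. Standard properties of a tree-decomposition imply that the sets $U_i$ partition $V(G)\setminus B_x$ and that each component of $G-B_x$ lies inside a single $U_i$. So it suffices to locate $x\in V(T)$ with $|S\cap U_i|\leq \tfrac{1}{2}|S\setminus B_x|$ for every $i$; call such $x$ \emph{good} and take $X:=B_x$.

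Since $\sum_i |S\cap U_i|=|S\setminus B_x|$, at most one $i$ can strictly exceed the threshold. If there is such a violation at $x$, orient the corresponding edge of $T$ from $x$ to its unique ``heavy'' neighbor. Every node then has out-degree at most one, and nodes of out-degree zero are exactly the good nodes. Starting from any node and following outgoing edges, the walk on the finite tree $T$ must either terminate at a good node, or else cycle on an edge $xy$ oriented in both directions. Writing $V_x',V_y'$ for the unions of bags on the two sides obtained by deleting edge $xy$ from $T$, the identity
\[
|S\cap(V_y'\setminus B_x)|+|S\cap(V_x'\setminus B_y)|=|S|-|S\cap B_x\cap B_y|,
\]
combined with the two ``heavy'' inequalities defining the bi-directed configuration, implies $|S\cap B_x|+|S\cap B_y|>2|S\cap B_x\cap B_y|$, i.e.\ $S\cap(B_x\triangle B_y)\neq\emptyset$.

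The main obstacle is eliminating this bi-directed case. I would address it by induction on $|V(T)|$: a bi-directed edge $xy$ certifies that $B_x\cap B_y$ already separates $V_x'\setminus B_y$ from $V_y'\setminus B_x$ in $G$, and restricting the tree-decomposition to the two subtrees of $T-xy$ yields tree-decompositions of width $\leq k$ for the induced subgraphs on $V_x'$ and $V_y'$, each with strictly fewer tree nodes. Applying the inductive hypothesis on each side with $S$ restricted accordingly produces separators that can be combined, with (at most) the vertices of $B_x\cap B_y$, into a single set $X$ of size $\leq k+1$ for $G$. Verifying that this combined $X$ satisfies the global balance condition \(|S\cap C|\leq \tfrac12|S\setminus X|\) for every component $C$ of $G-X$ — using that $S\cap(B_x\triangle B_y)\neq\emptyset$ to control how the inductive ``half-of-the-remainder'' bounds on the two sides aggregate — is the technical heart of the argument; everything else follows from the standard tree-decomposition machinery already described.
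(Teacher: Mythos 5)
Your first half is the standard opening move, and it is carried out correctly: the orientation has out-degree at most one, a good node or a bi-directed edge must exist, and your identity at a bi-directed edge does yield $S\cap(B_x\triangle B_y)\neq\emptyset$. Note, however, that this opening only suffices for the weaker threshold $\tfrac12|S|$ (where the two heavy sets at a bi-directed edge are disjoint and together exceed $|S|$, an outright contradiction); this is essentially what the paper's \cref{TreeDecSep} with $q=1$ and $S$-indicator weights delivers, and indeed the paper does not prove \cref{RS-BasicSeparator} at all but cites it as (2.5) of Robertson and Seymour. For the stronger threshold $\tfrac12|S\setminus X|$ the bi-directed case is not an anomaly to be ``eliminated'': it genuinely occurs, and when it does, no bag of the given decomposition need work. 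Take $G$ to be the path on vertices $a,b,c$ (edges $ab$, $bc$), the width-$1$ decomposition with bags $B_1=\{a,b\}$, $B_2=\{b,c\}$, and $S=\{a,c\}$. Each bag fails (the unique component of $G-B_i$ contains one vertex of $S$ while $\tfrac12|S\setminus B_i|=\tfrac12$), the unique tree edge is oriented both ways, and the valid separators here, such as $X=\{b\}$ or $X=\{a,c\}$, are not bags. So the plan announced at the outset --- ``choose $X$ as a bag of the given decomposition'' --- cannot be completed as stated, and the derived fact $S\cap(B_x\triangle B_y)\neq\emptyset$ (which holds in this example) carries essentially no leverage.

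The inductive repair you sketch is exactly where the gap lies, and as described it does not work. Applying the hypothesis to both sides of the bi-directed edge produces two separators of size up to $k+1$ each; together with $B_x\cap B_y$ this is up to $3(k+1)$ vertices, so they cannot be ``combined into a single set $X$ of size $\leq k+1$'' in general. If instead you keep only one side's separator $X_1$ (possibly augmented by $B_x\cap B_y$), the size bound can still be violated, and, more seriously, nothing controls the components of $G-X_1$ that live on the far side of the cut: they may contain almost all of $S$ on that side, far exceeding $\tfrac12|S\setminus X_1|$. The step you defer as ``the technical heart'' is precisely the content of the lemma; a correct proof of the $\tfrac12|S\setminus X|$ version needs a genuinely different mechanism (for instance an extremal choice of a separation of order at most $k+1$, or allowing $X$ to be an adhesion set or a modified bag), not just a verification appended to the orientation argument.
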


For the proof of \cref{Spread} we need a version of \cref{RS-BasicSeparator} where each component of $G-X$ has substantially fewer than $\frac{|S|}{2}$ vertices in $S$. The next lemmas accomplish this (see \citep{DvoWoo,Thomassen88a} for similar results in the unweighted setting).

%

A \defn{weighting} of a graph $G$ is a function $\gamma:V(G)\to \RR^+$.
The \defn{weight} of a subgraph $G'$ of $G$ is $\mathdefn{\gamma(G')}:=\sum_{v\in V(G')}\gamma(v)$. 

For a tree $T$ rooted at a vertex $r\in V(T)$, any subtree $T'$ of $T$ is considered to be rooted at the (unique) vertex in $T'$ at minimum distance from $r$ in $T$. 

\begin{lem}
\label{TreeDecSep}
\label{TreewidthSep}
For any tree-decomposition $\TT$ of a graph $G$, for any weighting $\gamma$ of $G$, for any integer $q\geq 0$, there is a set $X\subseteq V(G)$ consisting of the union of at most $q$ bags of $\TT$, such that each component of $G-X$ has weight at most $\frac{\gamma(G)}{q+1}$. In particular, if $\TT$ has width $k$, then $|X|\leq q(k+1)$.
\end{lem}

\begin{proof}
We proceed by induction on $q$. The $q=0$ case holds trivially with $Z=\emptyset$. Now assume that $q\geq 1$ and the result holds for $q-1$. Say $\TT=(B_x:x\in V(T))$. Root $T$ at an arbitrary vertex $r$. For each node $v\in V(T)$, let $T_v$ be the subtree of $T$ induced by $v$ and its descendants. Let $G_v:= G[ \bigcup\{ B_x : x\in V(T_v) \}]$. If $G_r$ has weight at most $\frac{\gamma(G)}{q+1}$, then $Z=\emptyset$ satisfies the claim. Now assume 
 $G_r$ has weight greater than $\frac{\gamma(G)}{q+1}$. Let $v$ be a vertex in $T$ furthest from $r$ such that $G_v$ has weight greater than $\frac{\gamma(G)}{q+1}$. Let $T':=T-V(T_v)$ and $G':=G-V(G_v)$. So $G'$ has weight at most $\frac{q \gamma(G)}{q+1}$, and $(B_x \cap V(G') :x\in V(T'))$ is a tree-decomposition of $G'$. By induction, there is a set $Z'$ of at most $q-1$ nodes in $T'$ such that each component of $G'-\bigcup\{B_z:z\in Z'\}$ has weight at most $\frac{\gamma(G)}{q+1}$. Let $Z := Z' \cup  \{v\}$. Each component of $G-\bigcup\{B_z:z\in Z\}$ is a component of either $G'-\bigcup\{B_x:z\in Z'\}$ or $G_v-B_v$. The former components have weight at most $\frac{\gamma(G)}{q+1}$ by induction. The latter components have weight at most $\frac{\gamma(G)}{q+1}$ by the choice of $v$. Thus each component of $G-\bigcup\{B_z:z\in Z\}$ has weight at most $\frac{\gamma(G)}{q+1}$.
\end{proof}

\cref{TreewidthSep} implies the next result, where each vertex in $S$ is weighted 1, and each vertex in $V(G)\setminus S$ is weighted 0.

\begin{cor}
\label{TreewidthSetSep}
For any tree-decomposition $\TT$ of a graph $G$, for any set $S\subseteq V(G)$, for any integer $q\geq 0$, there is a set $X\subseteq V(G)$ consisting of the union of at most $q$ bags of $\TT$, such that each component of $G-X$ has at most $\frac{|S|}{q+1}$ vertices in $S$. In particular, if $\TT$ has width $k$, then $|X|\leq q(k+1)$.
\end{cor}

We use \cref{TreewidthSetSep} in the proof of \cref{Spread} below. 

The next lemma by \citet{RS-II} builds on  \cref{RS-BasicSeparator}  by combining the components of $G-X$ into two groups. 

\begin{lem}[{\protect\citep[(2.6)]{RS-II}}]
\label{RS-CombinedSeparator}
For any graph $G$ with treewidth at most $k$, for any set $S\subseteq V(G)$, there are induced subgraphs $G_1$ and $G_2$ of $G$ with $G_1\cup G_2=G$, such that if $X:=V(G_1\cap G_2)$, then $|X|\leq k+1$ and $G_i-X$ has at most $\frac23 |S\setminus X|$ vertices in $S$, for each $i\in\{1,2\}$.
\end{lem}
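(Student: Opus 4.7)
The plan is to prove \cref{SpreadSmallDegree} via a general recursive lemma (the paper's \emph{WeakTreeDecompGen}) that simultaneously controls all four parameters. The recursion follows the balanced-separator blueprint used in the proof of \cref{Spread}, but the sub-problems produced at each step are grouped in a tree-partition-like fashion so as to bound both the maximum degree of the host tree and the total number of bags.

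Setup. I would carry out a recursion on a pair $(H, S)$, where $H$ is an induced subgraph of $G$ with $\tw(H) \leq k-1$ and $S \subseteq V(H)$ is a ``boundary'' set required to sit in the root bag of the tree-decomposition being built. The central invariant is that every vertex of $S$ has a neighbour in $V(H) \setminus S$; this is the feature that eventually forces each vertex $v$ to appear in at most $\deg_G(v)+1$ bags. I would also maintain $|S| \leq 6k$ (say), leaving room inside a bag of size $\leq 72k+1$ for a separator of size roughly $66(k+1)$.

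Recursive step. If $|V(H)| \leq 2k$, return the trivial single-bag decomposition. Otherwise apply \cref{TreewidthSetSep} to $H$ with weight $1$ on every vertex and a constant $q$ chosen so that $|S| + q(k+1) \leq 72k+1$, obtaining a separator $X$ of size at most $q(k+1)$ such that each component of $H - X$ has at most $|V(H)|/(q+1)$ vertices. Let the root bag be $B := S \cup X$. Then pack the components of $H - X$ greedily into at most $11$ buckets $C_1, \ldots, C_{11}$, each of total size at most $|V(H)|/2$; this succeeds because each component is already much smaller than $|V(H)|/2$ once $q$ is large enough. For each bucket, let $S_i \subseteq B$ be the set of vertices of $B$ that have a $G$-neighbour inside $C_i$, recurse on $(H[V(C_i) \cup S_i], S_i)$, and attach the returned tree as a subtree of the root.

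Verification and obstacle. The width bound is immediate from the construction; the degree is at most $1 + 11 = 12$. For spread, each placement of a vertex $v$ into a boundary set $S_i$ consumes a private $G$-edge of $v$, namely an edge to some vertex in the interior $C_i$ of the subproblem, and that edge is no longer available to force $v$ onto any deeper boundary. For order, the leaves of the host tree hold pairwise-disjoint subsets of $V(G)$ of size at least $2k$, so the number of leaves --- and, by the degree-$12$ bound, the total number of bags --- is at most $\max\{|V(G)|/(2k),1\}$. The main obstacle is the simultaneous calibration of constants: $q$ must be large enough that every component fits in half of $V(H)$ and that $11$ buckets suffice, yet small enough that $|S| + q(k+1) \leq 72k+1$, and the bound $|S_i| \leq 6k$ must be preserved when $S_i$ is carved out of $B$. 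Coupled with the careful spread-ledger argument --- tracking along each root-to-leaf path which edges of $v$ have already been spent on a boundary assignment so that none is double-counted --- these constraints determine the final constants $72$, $12$, and $2$ in the statement.
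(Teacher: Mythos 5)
Your proposal does not address the statement you were asked to prove. The statement is \cref{RS-CombinedSeparator}, the Robertson--Seymour separation lemma: for a graph $G$ of treewidth at most $k$ and a set $S\subseteq V(G)$, one must exhibit two induced subgraphs $G_1,G_2$ with $G_1\cup G_2=G$, $|X|\leq k+1$ where $X=V(G_1\cap G_2)$, and each $G_i-X$ containing at most $\frac23|S\setminus X|$ vertices of $S$. What you have written instead is a high-level sketch of a recursive construction of a tree-decomposition with bounded width, spread, order and degree, i.e.\ an attack on \cref{SpreadSmallDegree} (via something like \cref{WeakTreeDecompGen}). Nowhere in your argument do you produce the required two-part separation, nor does the quantity $\frac23|S\setminus X|$ (or anything playing its role) appear; the constants $72$, $12$, $2k$ you calibrate are irrelevant to the lemma at hand. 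So as a proof of \cref{RS-CombinedSeparator} there is no partial progress to salvage: the entire argument is aimed at a different theorem.

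For the record, the paper itself does not reprove this lemma; it is quoted from Robertson and Seymour [(2.6) of Graph Minors II], and the paper observes that its own \cref{SeparationSetSep} with $\beta=\frac23$ and $m=2$ ``almost'' recovers it, the difference being the bound $\frac23|S|$ versus the sharper $\frac23|S\setminus X|$. If you wanted to prove it directly, the standard route is: apply \cref{RS-BasicSeparator} to get $X$ with $|X|\leq k+1$ such that every component of $G-X$ has at most $\frac12|S\setminus X|$ vertices of $S$, and then group the components into two classes, greedily adding components to the first class while its count of $S$-vertices is at most $\frac13|S\setminus X|$; since each single component contributes at most $\frac12|S\setminus X|$, both classes end up with at most $\frac23|S\setminus X|$ vertices of $S$, and taking $G_i$ to be the subgraph induced by $X$ together with the $i$-th class gives the claimed $G_1,G_2$. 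This grouping step is exactly the idea formalised more generally in \cref{PseudoComponents}, but with the count taken relative to $S\setminus X$ rather than $S$.
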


Consider the following more general `component grouping' lemma.

\begin{lem}
\label{PseudoComponents}
For any graph $G$, for any weighting $\gamma$ of $G$, for any real number $w>0$, if there is a set $X\subseteq V(G)$ such that each component of $G-X$ has weight at most $w$, then there are subgraphs $G_1,\dots,G_m$ of $G$ such that: 
\begin{itemize}
  \item $G=G_1\cup\dots\cup G_m$,
  \item $V(G_i\cap G_j)=X$ for all distinct $i,j\in\{1,\dots,m\}$,
  \item $\gamma(G_i-X) \leq w$ for each $i\in\{1,\dots,m\}$, and
  \item $m\leq \ceil{\frac{2\,\gamma(G-X)}{w}}-1$.
\end{itemize}
\end{lem}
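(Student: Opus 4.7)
The plan is to partition the components of $G-X$ into groups of total weight at most $w$ and then define each $G_i$ as the subgraph of $G$ induced by $X$ together with the vertices of the $i$-th group. Concretely, let $C_1,\dots,C_t$ be the components of $G-X$, each of weight $\gamma(C_j)\le w$ by hypothesis. I would choose a partition $\mathcal{C}_1\sqcup\cdots\sqcup \mathcal{C}_m$ of $\{C_1,\dots,C_t\}$ with $\gamma\bigl(\bigcup \mathcal{C}_i\bigr)\le w$ for each $i$, and set $G_i := G\bigl[X\cup \bigcup_{C\in \mathcal{C}_i} V(C)\bigr]$.

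The first three required properties are then immediate from the way $X$ separates components. Every edge of $G$ lies in $G[X]$, inside a single $C_j$, or between $X$ and one $C_j$, so $G = G_1\cup\cdots\cup G_m$. Distinct groups share no components and there are no edges between distinct components of $G-X$, so $V(G_i\cap G_j) = X$ whenever $i\ne j$. The weight bound $\gamma(G_i - X) \le w$ is exactly the partition condition.

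To force the count $m$ to be small, I would build a \emph{reduced} partition: start from the singleton partition $\{\{C_1\},\dots,\{C_t\}\}$, which is valid since each $\gamma(C_j)\le w$, and repeatedly merge any two groups whose combined weight is still at most $w$. When this process halts, $\gamma\bigl(\bigcup \mathcal{C}_i\bigr) + \gamma\bigl(\bigcup \mathcal{C}_j\bigr) > w$ for every pair $i\ne j$. Summing this strict inequality over the $\binom{m}{2}$ unordered pairs gives $(m-1)\,\gamma(G-X) > \binom{m}{2}\,w$ (each group appears in $m-1$ pairs), and dividing through by $m-1$ when $m\ge 2$ yields $m < 2\gamma(G-X)/w$, hence $m \le \ceil{2\gamma(G-X)/w} - 1$.

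The main obstacle I anticipate is the boundary case $m = 1$, where the pairwise strict inequality is vacuous: the stated bound then requires $\gamma(G-X) > w/2$, which has to be checked by hand or ensured by how the lemma is invoked (i.e.\ with $X$ chosen so that $G-X$ carries non-negligible weight). For $m\ge 2$ the argument is clean and follows entirely from the greedy merging together with the pairing calculation above.
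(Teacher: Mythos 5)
Your proposal is correct and matches the paper's proof almost exactly: the paper chooses the partition of $V(G-X)$ into pseudo-components (non-empty unions of components) with $m$ minimum, rather than greedily merging from singletons, but these are equivalent formulations — both yield the pairwise inequality $\gamma(C_i)+\gamma(C_j)>w$ and the same $\binom{m}{2}$-pair count $(m-1)\gamma(G-X)>\binom{m}{2}w$. The $m=1$ edge case you flag is a genuine subtlety that the paper's own proof shares (dividing by $m-1$ to reach $m<2\gamma(G-X)/w$ requires $m\geq 2$), so you are right that the stated bound tacitly relies on $\gamma(G-X)>w/2$, which the callers of the lemma arrange.
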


\begin{proof} 
Say a \defn{pseudo-component} of $G-X$ is a non-empty union of components of $G-X$. Let $C_1,\dots,C_m$ be pseudo-components of $G-X$, such that $V(C_1),\dots,V(C_m)$ is a partition of $V(G-X)$, each $C_i$ has weight at most $w$, and with $m$ minimum. This is well-defined, since the components of $G-X$ are candidates. Let $G_i:=G[V(C_i)\cup X]$ for each $i\in\{1,\dots,m\}$. The first three claims hold by construction. It remains to bound $m$. By the minimality of $m$, for any distinct $i,j\in\{1,\dots,m\}$,  $\gamma(C_i)+\gamma(C_j)>w$, otherwise $C_i$ and $C_j$ could be replaced by $C_i\cup C_j$ in the list of pseudo-components. Thus $$(m-1)\gamma(G-X) = (m-1)\sum_i\gamma(C_i)= \sum_{i\neq j}\gamma(C_i)+\gamma(C_j)> \tbinom{m}{2}\,w.$$ Hence $m<\frac{2\,\gamma(G-X)}{w}$ and $m\leq \ceil{\frac{2\,\gamma(G-X)}{w}}-1$. 
\end{proof}

\begin{lem}
\label{GenSeparation} 
For any tree-decomposition $\TT$ of a graph $G$, for any weighting $\gamma$ of $G$, for any real number $\beta>0$, 
there is a set $X\subseteq V(G)$ consisting of the union of at most $\ceil{\frac{1}{\beta}}-1$ bags of $\TT$, and 
there are subgraphs $G_1,\dots,G_m$ of $G$ with $m\leq \ceil{\frac{2}{\beta}}-1$ such that: 
\begin{itemize}
  \item $G=G_1\cup\dots\cup G_m$,
  \item $V(G_i\cap G_j)=X$ for all distinct $i,j\in\{1,\dots,m\}$,
  \item $\gamma(G_i-X) \leq \beta\,\gamma(G)$ for each $i\in\{1,\dots,m\}$.
\end{itemize}
In particular, if $\TT$ has width $k$, then $|X|\leq (\ceil{\frac{1}{\beta}}-1)(k+1)$.
\end{lem}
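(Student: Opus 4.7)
The plan is to obtain \cref{GenSeparation} as a direct composition of the two preceding lemmas: use \cref{TreewidthSep} to produce a small separator whose removal leaves only light components, then use \cref{PseudoComponents} to group those components into the required number of parts.

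First I would set $q := \ceil{\tfrac{1}{\beta}} - 1$, which is a non-negative integer (if $\beta \geq 1$ one may take $q=0$ and $X=\emptyset$, and the statement becomes trivial with $m=1$ and $G_1=G$; so the interesting case is $\beta<1$). Applying \cref{TreewidthSep} to $G$, $\gamma$, and this $q$ yields a set $X \subseteq V(G)$ with
\[
|X| \leq q(k+1) = \bigl(\ceil{\tfrac{1}{\beta}}-1\bigr)(k+1),
\]
such that every component of $G-X$ has weight at most $\gamma(G)/(q+1) = \gamma(G)/\ceil{\tfrac{1}{\beta}} \leq \beta\,\gamma(G)$. This is exactly the bound on $|X|$ demanded by the statement.

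Next I would apply \cref{PseudoComponents} to $G$, $\gamma$, the same $X$, and $w := \beta\,\gamma(G)$. Since each component of $G-X$ has weight at most $w$, the hypothesis of that lemma is satisfied, and it produces subgraphs $G_1,\dots,G_m$ of $G$ with $G = G_1\cup\dots\cup G_m$, with $V(G_i\cap G_j)=X$ for all distinct $i,j$, and with $\gamma(G_i-X) \leq w = \beta\,\gamma(G)$. The bound on $m$ follows from
\[
m \leq \Bigl\lceil \frac{2\,\gamma(G-X)}{w}\Bigr\rceil - 1 \leq \Bigl\lceil \frac{2\,\gamma(G)}{\beta\,\gamma(G)}\Bigr\rceil - 1 = \Bigl\lceil \frac{2}{\beta}\Bigr\rceil - 1,
\]
using $\gamma(G-X) \leq \gamma(G)$.

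There is no real obstacle here: the only thing to check is that the constant $q+1 = \ceil{1/\beta}$ chosen for \cref{TreewidthSep} and the threshold $w = \beta\,\gamma(G)$ chosen for \cref{PseudoComponents} line up so that both the separator bound and the bound on $m$ come out with the stated rounding. The sole subtlety worth flagging is handling the degenerate ranges of $\beta$ (especially $\beta \geq 1$, where $q=0$ and the conclusion reduces to $G_1=G$); I would mention this briefly so that the formula $(\ceil{1/\beta}-1)(k+1)$ is interpreted correctly when $\ceil{1/\beta}=1$.
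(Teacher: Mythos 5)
Your proof is correct and follows essentially the same approach as the paper: set $q := \ceil{1/\beta}-1$ and $w := \beta\,\gamma(G)$, apply \cref{TreewidthSep} to obtain $X$, and then apply \cref{PseudoComponents} to group the components and bound $m$. Your explicit note about the degenerate range $\beta \geq 1$ is a harmless extra remark; the paper handles it implicitly since $q\geq 0$ covers it.
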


\begin{proof}
Let $w:=\beta\,\gamma(G)$ and $q:=\ceil{\frac{1}{\beta}}-1$. 
So $q\geq 0$ and $\beta\geq\frac{1}{q+1}$. 
By \cref{TreewidthSep}, there is a set $X$ of at most $q(k+1)$ vertices in $G$ such that each component of $G-X$ has weight at most $\frac{\gamma(G)}{q+1}\leq w$. 
The result follows from \cref{PseudoComponents}, where 
$m
\leq  \ceil{\frac{2\,\gamma(G-X)}{\beta\gamma(G)}}-1
\leq  \ceil{\frac{2}{\beta}}-1$. 
\end{proof}

\cref{GenSeparation} implies the next result, where each vertex in $S$ is weighted 1, and each vertex in $V(G)\setminus S$ is weighted 0.

\begin{cor}
\label{SeparationSetSep}
For any tree-decomposition $\TT$ of a graph $G$, for any set $S\subseteq V(G)$, for any real number $\beta>0$, 
there is a set $X\subseteq V(G)$ consisting of the union of at most $\ceil{\frac{1}{\beta}}-1$ bags of $\TT$, 
and there are subgraphs $G_1,\dots,G_m$ of $G$ with $m\leq \ceil{\frac{2}{\beta}}-1$ such that: 
\begin{itemize}
  \item $G=G_1\cup\dots\cup G_m$,
  \item $V(G_i\cap G_j)=X$ for all distinct $i,j\in\{1,\dots,m\}$,
  \item $G_i-X$ has at most $\beta|S|$ vertices in $S$ for each $i\in\{1,\dots,m\}$.
\end{itemize}
In particular, if $\TT$ has width $k$, then $|X|\leq (\ceil{\frac{1}{\beta}}-1)(k+1)$.
\end{cor}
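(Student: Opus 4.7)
The plan is to specialise \cref{GenSeparation} to the indicator weighting of $S$, exactly as the author used \cref{TreewidthSep} to deduce \cref{TreewidthSetSep}. Define $\gamma:V(G)\to\RR^+$ by $\gamma(v):=1$ if $v\in S$ and $\gamma(v):=0$ otherwise; under this weighting $\gamma(G)=|S|$, and for any subgraph $H$ of $G$, $\gamma(H)=|S\cap V(H)|$.

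Applying \cref{GenSeparation} with this $\gamma$ and the given $\beta$ produces a set $X\subseteq V(G)$ with $|X|\leq(\ceil{1/\beta}-1)(k+1)$ and subgraphs $G_1,\dots,G_m$ with $m\leq\ceil{2/\beta}-1$ satisfying $G=G_1\cup\dots\cup G_m$ and $V(G_i\cap G_j)=X$ for all distinct $i,j\in\{1,\dots,m\}$. The first two bullets of the claim are thus immediate. For the third bullet, the inequality $\gamma(G_i-X)\leq\beta\,\gamma(G)$ supplied by \cref{GenSeparation} translates directly into $|S\cap V(G_i-X)|\leq\beta|S|$, which says that each $G_i-X$ contains at most $\beta|S|$ vertices of $S$, as required.

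Since this is a one-line specialisation via a standard zero/one weighting, there is no real obstacle; the only thing to verify is that the convention used for $\RR^+$ permits the value $0$, which the preceding derivation of \cref{TreewidthSetSep} from \cref{TreewidthSep} already relies upon. If one insisted on strictly positive weights, assigning $\gamma(v):=\epsilon$ on $V(G)\setminus S$ for a sufficiently small $\epsilon>0$ would work equally well, since the bounds on $|X|$, $m$, and the $S$-vertex counts are insensitive to $\epsilon$ once $\epsilon$ is small enough.
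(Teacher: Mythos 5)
Your proposal is correct and is exactly the paper's argument: \cref{SeparationSetSep} is obtained from \cref{GenSeparation} by weighting each vertex of $S$ with $1$ and each vertex of $V(G)\setminus S$ with $0$, just as \cref{TreewidthSetSep} is obtained from \cref{TreewidthSep}. Your side remark about zero weights (or the $\epsilon$ workaround) matches the convention already implicit in the paper, so there is nothing further to fix.
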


The case $\beta=\frac23$ and $m=2$ of \cref{SeparationSetSep} almost implies \cref{RS-CombinedSeparator}; the only difference is that in \cref{RS-CombinedSeparator}, each $G_i-X$ has at most $\frac{2}{3}|S\setminus X|$ vertices in $S$. 

We finish this section by noting that balanced separators like in \cref{RS-BasicSeparator} characterise treewidth up to a constant factor, as shown by the following result (see \citep{Reed97,RS-X,CDDFGHHWWY}).

\begin{thm}
\label{Bruce}
Let $k$ be a positive integer. Let $G$ be a graph such that for every set $S$ of $2k+1$ vertices in $G$ there is a set $X$ of $k$ vertices in $G$ such that each component of $G-X$ has at most $k$ vertices in $S$. Then $G$ has treewidth at most $3k$.
\end{thm}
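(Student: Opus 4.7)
The plan is to induct on $|V(G)|$, proving the following stronger statement: for every graph $G$ satisfying the balanced-separator hypothesis of the theorem, and every $W\subseteq V(G)$ with $|W|\le 2k$, the graph $G$ admits a tree-decomposition of width at most $3k$ whose root bag contains $W$. The main theorem then follows by taking $W=\emptyset$.

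The base case is $|V(G)|\le 3k+1$: a single bag containing all of $V(G)$ suffices. For the inductive step with $|V(G)|\ge 3k+2$, extend $W$ to a set $S$ of exactly $2k+1$ vertices, apply the hypothesis to obtain a set $X\subseteq V(G)$ with $|X|\le k$ such that each component of $G-X$ contains at most $k$ vertices of $S$, and observe that $|S\setminus X|\ge k+1>k$ forces $G-X$ to have at least two components. For each component $C$ of $G-X$ set $G_C:=G[V(C)\cup X]$ and $W_C:=X\cup (W\cap V(C))$; then $|V(G_C)|<|V(G)|$ and $|W_C|\le |X|+|S\cap V(C)|\le k+k=2k$. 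Assuming for the moment that each $G_C$ also satisfies the hypothesis, induction supplies a width-$3k$ tree-decomposition of $G_C$ whose root bag contains $W_C$. Glue these sub-decompositions by introducing a new root $r$ with bag $B_r:=W\cup X$ of size at most $3k$, adjacent to the root of each sub-decomposition. Verifying the tree-decomposition axioms is routine: edges within $X$ are in $B_r$, and edges inside any $G_C$ are covered inductively; since $X\subseteq W_C$, each $x\in X$ lies in $B_r$ and in the root bag of every sub-decomposition, so the bags containing $x$ form a connected subtree; and each vertex of $W\setminus X$ lies in $B_r$ and in the unique sub-decomposition for the component containing it, whose root bag contains $W\cap V(C)$.

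The main technical obstacle is verifying that each $G_C$ inherits the balanced-separator hypothesis. Given any $S'\subseteq V(G_C)$ with $|S'|=2k+1$, apply the hypothesis on $G$ itself to obtain $X'\subseteq V(G)$ with $|X'|\le k$ such that each component of $G-X'$ contains at most $k$ vertices of $S'$, and set $Y:=X'\cap V(G_C)$, so $|Y|\le k$. The key observation is that any path in $G_C$ avoiding $Y$ is automatically a path in $G$ avoiding $X'$ (since it only uses vertices of $V(G_C)$ and avoids every element of $X'$ that lies in $V(G_C)$). Hence each component of $G_C-Y$ is contained in a single component of $G-X'$ and therefore meets $S'$ in at most $k$ vertices, giving the hypothesis for $G_C$. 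With this in place the induction closes and produces a tree-decomposition of $G$ of width at most $3k$.
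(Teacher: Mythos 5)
Your proof is correct: the strengthened inductive statement (a width-$3k$ decomposition whose root bag contains a prescribed set $W$ of at most $2k$ vertices), the choice $W_C=X\cup(W\cap V(C))$ with $|W_C|\le 2k$, and the inheritance of the separator hypothesis via $Y=X'\cap V(G_C)$ all check out, and this is exactly the standard greedy argument from the sources (\citealp{Reed97,RS-X}) that the paper cites for \cref{Bruce} without giving a proof. The only cosmetic point is that the hypothesis asks for a separator of exactly $k$ vertices while your inherited set $Y$ has at most $k$; padding $Y$ arbitrarily (possible since $|V(G_C)|\ge 2k+1$) resolves this triviality.
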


Also note the following qualitative strengthening of \cref{Bruce} by \citet{DN19} (not used in this paper). 

\begin{thm}[\citep{DN19}] 
Let $G$ be a graph such that for every subgraph $G'$ of $G$ there is a set $X$ of at most $k$ vertices in $G'$ such that each component of $G'-X$ has at most $\frac12|V(G')|$ vertices. Then $G$ has treewidth at most $15k$.
\end{thm}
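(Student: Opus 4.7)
The plan is to prove \cref{SpreadSmallDegree} by induction via a strengthened recursive lemma that simultaneously tracks all four invariants. The claim to prove is: for every graph $G'$ with $\tw(G')\le k-1$ and every $S\subseteq V(G')$ with $|S|\le 36k+1$, there is a rooted tree-decomposition $(B_x:x\in V(T))$ of $G'$ with $S\subseteq B_r$, $|B_x|\le 72k+2$ for every $x$, root degree at most $11$ and every other degree at most $12$, $|V(T)|\le \max\{\lceil|V(G')\setminus S|/(2k)\rceil,1\}$, and spread of every $v\in V(G')$ at most $1+|N_{G'}(v)\setminus S|$. Applied with $S=\emptyset$, this yields the theorem.

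For the inductive step I would apply a balanced-separator argument obtained from \cref{TreewidthSep} with a weighting that distinguishes $S$ from $V(G')\setminus S$ (so that a single separator simultaneously reduces both $|S\cap V^i|$ and $|V^i\setminus S|$ by a factor of at most $1/6$ in each piece), together with \cref{PseudoComponents} to form at most $m\le 11$ pseudo-components. This yields a separator $X\subseteq V(G')$ of size $O(k)$, and I set the root bag $B_r:=S\cup X$. For each piece $G_i=G'[X\cup V^i]$ with $V^i:=V(G_i)\setminus X$, the reduced boundary is $S_i:=\{v\in S\cup X: v$ has a neighbour in $V^i\setminus(S\cup X)\}$; a short calculation gives $|S_i|\le|S|/6+|X|\le 36k+1$, so the induction applies. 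The degree bound is immediate (at most $11$ children plus a parent edge).

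The spread bound propagates by a charging argument: every bag containing $v$ other than its topmost (``home'') occurrence is charged to a distinct neighbour of $v$ that is fresh in the corresponding piece, i.e.\ lies in $V^i\setminus(S\cup X)$ for the appropriate~$i$. Because the fresh sets across pieces at each recursion level are pairwise disjoint, each neighbour is charged at most once throughout the entire recursion tree, giving spread at most $1+|N_{G'}(v)\setminus S|$. The order bound requires each root bag to ``earn'' $\ge 2k$ fresh vertices; this is arranged by drawing fresh vertices from $X\subseteq V(G')\setminus S$ whenever $|X|\ge 2k$ and otherwise merging one small piece into the root bag in a controlled way, yielding inductively $|V(T)|\le 1+\sum_i|V(T_i)|\le|V(G')\setminus S|/(2k)$.

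The main obstacle is engineering a single balanced separator that simultaneously controls both measures while keeping $m\le 11$ and $|X|$ small enough that $|S|\le 36k+1$ and $|B_r|\le 72k+2$ both remain invariant. This is where the proof must go beyond the technique of \cref{Spread} (which balances only one measure at a time): the combined weighting forces the width constant $72k+1$ to be substantially larger than the $14k+13$ of \cref{Spread}. A secondary obstacle is the tree-partition-style amortisation needed for the order bound — ensuring each recursion level contributes sufficient fresh vertices to its own root bag rather than handing them all downward — which is why the paper mentions abstracting both this theorem and the best known tree-partition bound into the single general result \cref{WeakTreeDecompGen}.
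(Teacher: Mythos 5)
Your proposal does not address the stated theorem. The statement in question is the result of \citet{DN19}: if every subgraph $G'$ of $G$ admits a set $X$ of at most $k$ vertices such that each component of $G'-X$ has at most $\frac12|V(G')|$ vertices, then $\tw(G)\leq 15k$. This is a separator-to-treewidth implication (a quantitative converse to the fact that bounded treewidth gives balanced separators), cited in the paper without proof and explicitly noted as not being used there. Your sketch instead outlines an inductive construction aimed at \cref{SpreadSmallDegree} --- a tree-decomposition of width $72k+1$, degree $12$, bounded order and spread, under the hypothesis $k\geq\tw(G)+1$ --- which has entirely different hypotheses and a different conclusion. Nowhere do you take the balanced-separator property of all subgraphs as input, and nowhere do you derive any treewidth bound, let alone $15k$; the argument that the statement actually requires (in the spirit of \cref{Bruce}: build a tree-decomposition by repeatedly applying the separator hypothesis to a suitable subgraph, maintaining a bounded ``boundary'' set in the current bag, with the balance measured against $|V(G')|$ rather than against a prescribed set $S$) is entirely absent.

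Even judged as a proof attempt for \cref{SpreadSmallDegree}, your outline diverges from the paper's route, which first proves the $(d{-}1)$-slick \emph{weak} tree-decomposition lemma (\cref{heart}) by splitting off two pieces via \cref{RS-CombinedSeparator} and identifying the two roots, then converts to an ordinary tree-decomposition with \cref{WeakNonWeak} and bounds the spread with \cref{SlickSpread}; and the two steps you flag as delicate (a single weighting yielding $m\leq 11$ pieces that shrink both measures while keeping $|S_i|\leq 36k+1$, and the amortisation giving $|V(T)|\leq |V(G')\setminus S|/(2k)$) are asserted rather than established. But the primary issue is the mismatch: you have written a proposal for a different theorem than the one you were asked to prove.
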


\section{Small Spread and Degree}
\label{SmallSpread}

This section proves \cref{SpreadDegree}, which shows that every graph has a tree-decomposition with small width, small spread and small degree. A key idea is the following sufficient condition  for small spread. A tree-decomposition $(B_x:x\in V(T))$ is \defn{rooted} if $T$ is rooted. A rooted tree-decomposition $(B_x:x\in V(T))$ is \defn{slick} if for each edge $xy\in E(T)$ with $x$ the parent of $y$, for each vertex $v\in B_x\cap B_y$, we have $ (N_G(v) \cap B_y) \setminus B_x \neq\emptyset$.

\begin{lem}
\label{SlickSpread}
In a slick tree-decomposition $(B_x:x\in V(T))$ of a graph $G$, each vertex $v\in V(G)$ has spread at most $\deg_G(v)+1$.
\end{lem}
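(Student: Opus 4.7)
The plan is to count, for each vertex $v \in V(G)$, the bags containing $v$ by charging each such bag (except one) to a distinct neighbour of $v$. Let $T_v$ be the subtree of $T$ induced by $\{x \in V(T) : v \in B_x\}$, and let $r_v$ be its root (the node of $T_v$ closest to the root of $T$). Every other node $y \in V(T_v) \setminus \{r_v\}$ has a parent $p(y)$ in $T$; since $T_v$ is a connected subtree containing both $y$ and $r_v$, the node $p(y)$ lies in $T_v$ as well. Hence $v \in B_{p(y)} \cap B_y$, so the slickness hypothesis applied to the edge $p(y)y$ provides a vertex
\[
u_y \in \bigl(N_G(v) \cap B_y\bigr) \setminus B_{p(y)}.
\]

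Next I would show that the map $y \mapsto u_y$ from $V(T_v) \setminus \{r_v\}$ into $N_G(v)$ is injective. Suppose towards a contradiction that $u_y = u_{y'} =: u$ for distinct $y, y' \in V(T_v) \setminus \{r_v\}$. Consider the unique $y$--$y'$ path in $T$. Because the bags containing $u$ form a connected subtree of $T$ and $u \in B_y \cap B_{y'}$, every node on this path has $u$ in its bag. However, the path from $y$ to $y'$ must pass through $p(y)$ (whether $y'$ is an ancestor, a descendant, or incomparable to $y$, the edge from $y$ to $p(y)$ is the first edge on the path, unless $y'$ is a descendant of $y$, in which case we instead argue symmetrically using $p(y')$). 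In every case we reach $u \in B_{p(y)}$ or $u \in B_{p(y')}$, contradicting the choice of $u_y$ or $u_{y'}$. Therefore the map is injective.

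Consequently $|V(T_v)| - 1 \leq |N_G(v)| = \deg_G(v)$, which gives spread $|V(T_v)| \leq \deg_G(v) + 1$, as required.

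The only delicate step is the injectivity argument, and it is delicate only in the bookkeeping: the real content is the combination of the subtree property of tree-decompositions (so $u$ persists along the $y$--$y'$ path) with the defining property of slickness (so $u$ is forbidden from the parent bag of the node it was chosen at). The case analysis on the relative position of $y$ and $y'$ in the rooted tree is routine once this picture is in mind.
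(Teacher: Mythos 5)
Your proof is correct and follows essentially the same approach as the paper: charge each non-root node $y$ of $T_v$ to a neighbour $u_y$ of $v$ guaranteed by slickness, then use connectedness of the subtree of bags containing $u_y$ to show the charges are distinct. Your case analysis on the position of $y$ versus $y'$ is simply an expansion of the paper's one-line "without loss of generality, the parent of $y_1$ lies on the $y_1y_2$-path" observation.
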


\begin{proof}
Consider a vertex $v\in V(G)$. Let $T_v:=T[\{x\in V(T):v\in B_x\}]$. For each edge $xy\in E(T_v)$ with $x$ the parent of $y$, there is a vertex $\hat{y}\in (N_G(v)\cap B_y)\setminus B_x$. Consider distinct non-root nodes $y_1,y_2\in V(T_v)$. Without loss of generality, the parent $x_1$ of $y_1$ is on the $y_1y_2$-path in $T$. Since  $\hat{y_1}\not\in B_{x_1}$ and $T_{\hat{y_1}}$ is connected, $\hat{y_1}\neq \hat{y_2}$. Thus $T_v$ has at most $\deg_G(v)$ non-root nodes, and $|V(T_v)|\leq\deg_G(v)+1$, as desired. 
\end{proof}

The next lemma (which essentially adds the `slick' property to \cref{Bruce}) is the main tool for proving \cref{SpreadDegree}.

\newcommand{\myalpha}{t}

\begin{lem}
\label{SpreadLemma}
Let $\ell,\myalpha$ be positive integers. Let $G$ be a graph such that for every set $S$ of $2\myalpha+2\ell$ vertices in $G$ there is a set $X$ of at most $\ell$ vertices in $G$, such that each component of $G-X$ has at most $\myalpha$ vertices in $S$. Then $G$ has a slick tree-decomposition with width at most $2\myalpha+3\ell-1$ and degree at most $4+\ceil{\frac{4\ell}{\myalpha}}$.
\end{lem}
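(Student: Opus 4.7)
I plan to prove a stronger inductive statement and derive the lemma by applying it with $H=G$ and $W=\emptyset$:
\emph{For every induced subgraph $H$ of $G$ and every $W \subseteq V(H)$ with $|W| \leq \myalpha + \ell$ such that every $v \in W$ has a $G$-neighbour in $V(H) \setminus W$, there is a slick rooted tree-decomposition of $H$ of width at most $2\myalpha + 3\ell - 1$ and degree at most $4 + \ceil{4\ell/\myalpha}$, whose root bag $B_r$ contains $W$ and satisfies $N_G(v) \cap (B_r \setminus W) \neq \emptyset$ for every $v \in W$.}
The external-neighbour condition on $B_r$ is exactly what is needed so that, when this subtree is attached below a parent bag $B_p$ with $B_p \cap V(H) = W$, the edge $pr$ is slick: $B_p \cap B_r$ equals $W$, and each $v \in W$ already has its witness inside $B_r \setminus W$.

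The proof is by induction on $|V(H)|$. For the base case $|V(H)| \leq 2\myalpha+3\ell$ I use the single-bag tree with $B_r := V(H)$; slickness is vacuous, the width is correct, and the external-neighbour condition is the hypothesis on $W$. For the inductive step, I first pick an $H$-neighbour $\phi(v) \in V(H) \setminus W$ for each $v \in W$, set $\Phi := \phi(W)$ (so $|W \cup \Phi| \leq 2\myalpha+2\ell$), and pad $W \cup \Phi$ to a set $S \subseteq V(H)$ of size exactly $2\myalpha+2\ell$. Since the separator hypothesis is inherited by induced subgraphs (restrict the given separator to $V(H)$), applying it to $H$ and $S$ yields $X \subseteq V(H)$ of size at most $\ell$ such that each component of $H-X$ meets $S$ in at most $\myalpha$ vertices. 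Then \cref{PseudoComponents} with weighting $\gamma(v) := 1$ if $v \in S \setminus X$ and $0$ otherwise, and with $w := \myalpha$, groups the components into pseudo-components $P_1,\dots,P_m$ with $m \leq 3 + \ceil{4\ell/\myalpha}$ (from $|S \setminus X| \leq 2\myalpha+2\ell$) and $m \geq 2$ (from $|S \setminus X| \geq 2\myalpha+\ell > \myalpha$). Set $B_r := W \cup \Phi \cup X$ of size at most $2\myalpha+3\ell$; the external-neighbour condition on $B_r$ is witnessed by the $\phi(v)\in \Phi$. For each $P_i$, put $H_i := H[V(P_i) \cup X]$ and $W_i := X \cup ((W \cup \Phi) \cap V(P_i))$; the inclusions $W \cup \Phi \subseteq S$ and $|S \cap V(P_i)| \leq \myalpha$ yield the essential bound $|W_i| \leq \myalpha + \ell$. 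Because some vertices of $W_i$ may violate the recursive invariant, I strip $W_i$ to $W_i^\star := \{v \in W_i : N_G(v) \cap (V(H_i) \setminus W_i) \neq \emptyset\}$ and recurse on $H_i^\star := H[V(H_i) \setminus (W_i \setminus W_i^\star)]$ with boundary $W_i^\star$; the dropped vertices have all their $H$-neighbours in $W_i \subseteq B_r$, so the edges they carry are already covered by $B_r$. Attach each returned subtree at its root $c_i$ as a child of $r$. Since $m \geq 2$, $|V(H_i^\star)| < |V(H)|$ and the induction closes.

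Width, degree and the tree-decomposition axioms then follow routinely. The delicate point is slickness at the edges $rc_i$: a brief set-theoretic calculation gives $B_r \cap V(H_i^\star) = W_i^\star$, hence $B_r \cap B_{c_i} = W_i^\star$ (using $W_i^\star \subseteq B_{c_i}$), and the inductive guarantee supplies the required neighbour in $B_{c_i} \setminus W_i^\star$ for each $v \in W_i^\star$. The main obstacle is keeping the accounting tight: choosing $\Phi$ \emph{before} invoking the separator hypothesis (so that $\Phi \subseteq S$) is what caps $|(W \cup \Phi) \cap V(P_i)| \leq \myalpha$ and thus $|W_i| \leq \myalpha + \ell$, while the stripping step $W_i \mapsto W_i^\star$ is what restores the external-neighbour invariant for the next recursive level.
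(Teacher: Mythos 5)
Your approach is essentially the one in the paper: the paper proves the slightly stronger \cref{AlphaBeta}, which carries a prescribed set $R$ of at most $2\myalpha+2\ell$ vertices into the root bag, whereas you carry a boundary set $W$ of at most $\myalpha+\ell$ vertices together with the invariant that $B_r$ contains an external witness for each $v\in W$. The paper bundles the witnesses into $R''_i$ \emph{before} the recursive call, while you generate $\Phi$ locally inside each call and push only $W_i^\star$ down; your stripping $W_i\mapsto W_i^\star$ matches the paper's deletion of $R_i^-$. The bookkeeping ($|W_i|\le\myalpha+\ell$ from $W\cup\Phi\subseteq S$ and the pseudo-component bound, the identity $B_r\cap V(H_i^\star)=W_i^\star$, and the slickness check at $rc_i$) all check out.

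There is one genuine gap: your inductive statement is not strong enough to establish the degree bound. As you state it, the recursive call on $H_i^\star$ only guarantees $\Delta(T_i)\le 4+\ceil{4\ell/\myalpha}$, so the root $c_i$ of $T_i$ may already have degree $4+\ceil{4\ell/\myalpha}$; once you attach $c_i$ as a child of $r$ it has degree $5+\ceil{4\ell/\myalpha}$, breaking the claimed bound. You need the inductive statement to also assert $\deg(r)\le 3+\ceil{4\ell/\myalpha}$ (as \cref{AlphaBeta} does). This is immediate from what you have, since $\deg(r)=m\le 3+\ceil{4\ell/\myalpha}$ and the base case has $\deg(r)=0$, but without it being part of the induction hypothesis the degree bound does not close. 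Adding that clause makes your proof correct.
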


\cref{SpreadLemma} is implied by the following slightly stronger statement.

\begin{lem}
\label{AlphaBeta}
Let $\ell,\myalpha$ be positive integers. Let $G$ be a graph such that for every set $S$ of $2t+2\ell$ vertices in $G$, there is a set $X$ of at most $\ell$ vertices in $G$, such that each component of $G-X$ has at most $\myalpha$ vertices in $S$. Then for every set $R$ of at most $2\myalpha+2\ell$ vertices in $G$ there is a slick tree-decomposition $(B_x:x\in V(T))$ of $G$ rooted at $r\in V(T)$ such that $R\subseteq B_r$, and $|B_x|\leq 2\myalpha+3\ell$ for each $x\in V(T)$. Moreover,  $\Delta(T)\leq 4+\ceil{\frac{4\ell}{\myalpha}}$ and $\deg_T(r)\leq 3+\ceil{\frac{4\ell}{\myalpha}}$.
\end{lem}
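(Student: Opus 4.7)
My plan is induction on $|V(G)|$. The base case is $|V(G)| \leq 2t+3\ell$: take $T$ to be a single node $r$ with $B_r := V(G)$, so that $R \subseteq B_r$, $|B_r| \leq 2t+3\ell$, and slickness holds vacuously.

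For the inductive step, suppose $|V(G)| > 2t+3\ell$. First extend $R$ to a set $S \subseteq V(G)$ of exactly $2t+2\ell$ vertices (possible since $|V(G)| > 2t+3\ell \geq 2t+2\ell$), and apply the lemma's hypothesis to $S$ to obtain a separator $X$ with $|X| \leq \ell$ such that each component of $G-X$ contains at most $t$ vertices of $S$. Set the root bag $B_r := R \cup X$, giving $|B_r| \leq 2t+3\ell$. Using \cref{PseudoComponents} with weighting $1$ on $S$ and $0$ elsewhere and threshold $w := t$, group the components of $G-X$ into pseudo-components $G_1,\dots,G_m$ of weight at most $t$ each. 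The bound simplifies to $m \leq 3+\ceil{4\ell/t}$, matching the required degree bound at $r$. Moreover $|S\setminus X| \geq 2t+\ell > t$ forces $m \geq 2$, which is crucial for the induction to strictly shrink $|V(G)|$.

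The main obstacle is slickness across each root edge $rc_g$. For each group $g$, let $V_g$ be its vertex set and $X_g := X \cap N_G(V_g)$ the set of $X$-vertices with a neighbour in $V_g$. The recursion on $G[V_g \cup X_g]$ (which inherits the lemma's separator hypothesis, as it is hereditary under induced subgraphs) must be called with $R_g \supseteq X_g \cup (R \cap V_g)$ to keep these vertices' subtrees in $T$ connected; slickness then demands that each such vertex have a neighbour in $B_{c_g}\setminus B_r$. My fix is to call $v \in X_g \cup (R \cap V_g)$ \emph{useless} when $N_G(v) \cap V_g \subseteq R$: every edge from such $v$ into $V_g$ then has both endpoints in $B_r = R \cup X$ and is already covered at the root, so $v$ may simply be deleted from the subgraph passed to the recursion. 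After deleting the set $A_g$ of useless vertices, each remaining $v \in (X_g \cup (R \cap V_g))\setminus A_g$ has some neighbour $w_v \in V_g \setminus R$, which I add to $R_g$ as a \emph{witness}. Using $|X_g| \leq \ell$, $|R \cap V_g| \leq |S \cap V_g| \leq t$, and at most $\ell+t$ witnesses, we obtain $|R_g| \leq 2t+2\ell$, so the inductive hypothesis applies to $G[(V_g \cup X_g)\setminus A_g]$ with this $R_g$.

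Finally I would verify the four conditions for the combined tree $T$. Every edge of $G$ is covered: edges within $X$, and edges incident to useless vertices, lie inside $B_r$, while remaining edges lie in some recursed subgraph and are covered there. Every vertex subtree is connected, because any $v \in R \cup X$ appearing anywhere in $T_g$ must lie in $R_g \subseteq B_{c_g}$. Slickness at each root edge $rc_g$ holds because each witness $w_v$ lies in $V_g \setminus R \subseteq B_{c_g}\setminus B_r$; slickness within each $T_g$ is inherited from the recursion. The bag bound $|B_x|\leq 2t+3\ell$ is immediate, and the degree bounds follow from $\deg_T(r) = m \leq 3+\ceil{4\ell/t}$ and $\deg_T(c_g) \leq 1 + (3+\ceil{4\ell/t}) = 4+\ceil{4\ell/t}$ (the parent edge plus the recursive bound on $\deg_{T_g}(c_g)$).
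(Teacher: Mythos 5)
Your proposal is correct and follows essentially the same approach as the paper's proof. Your set $A_g$ of ``useless'' vertices plays precisely the role of the paper's set $R^-_i$ (indeed, one can check that the paper's $G[(X\cup V(C_i))\setminus R^-_i]$ equals your $G[(V_g\cup X_g)\setminus A_g]$, since vertices in $X\setminus X_g$ automatically land in $R^-_i$), your witnesses correspond to the vertices added to form $R''_i$, and the counting for the bag, degree, and order bounds is identical.
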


\begin{proof}
We proceed by induction on $|V(G)|$. In the base case, if $|V(G)|\leq 2\myalpha+3\ell$, then the tree-decomposition with one bag $V(G)$ satisfies the claim. Now assume that $|V(G)| > 2\myalpha+3\ell$. Adding vertices if necessary, we may assume that $|R|=2\myalpha+2\ell$. By assumption, there is a set $X$ of at most $\ell$ vertices in $G$, such that each component of $G-X$ has at most $\myalpha$ vertices in $R$. 

Weight each vertex in $R$ by 1, and weight each vertex in $V(G)\setminus R$ by  0. 
The total weight is $|R|$, and each component of $G-X$ has weight at most $\myalpha$. 
By \cref{PseudoComponents} with $w=\myalpha$, there are subgraphs $G_1,\dots,G_m$ of $G$ such that: 
\begin{itemize}
  \item $G=G_1\cup\dots\cup G_m$,
  \item $V(G_i\cap G_j)=X$ for all distinct $i,j\in\{1,\dots,m\}$,
  \item $G_i-X$ has at most $\myalpha$ vertices in $R$, for each $i\in\{1,\dots,m\}$, and 
  \item $m\leq \ceil{\frac{2\,|R\setminus X|}{\myalpha}}-1 
  \leq \ceil{\frac{4t+4\ell}{\myalpha}}-1 = 3+\ceil{\frac{4\ell}{\myalpha}}$.
\end{itemize}
Note that $2\myalpha+2\ell=|R|\leq |X|+\myalpha m \leq \ell+\myalpha m$, implying $(m-2)\myalpha \geq \ell\geq 1$ and $m\geq 3$.

Consider $i\in\{1,\dots,m\}$. 
Let $R_i:=X\cup (R \cap V(G_i))$. 
Note that $|R_i|\leq \myalpha+\ell$.
Let $R^-_i$ be the set of vertices $v\in R_i$ such that $N_{G_i}(v)\subseteq R_i$. 
For each vertex $v\in R_i\setminus R^-_i$, we have 
$N_{G_i}(v)\setminus R_i\neq\emptyset$.
Let $R'_i$ be obtained from $R_i\setminus R^-_i$ by adding one vertex in 
$N_{G_i}(v)\setminus R_i$ to $R'_i$, for each $v\in R_i\setminus R^-_i$. 
So $|R'_i|\leq 2|R_i\setminus R^-_i|\leq 2|R_i|\leq 2(\myalpha+\ell)$. 
Let $G'_i:= G_i- R^-_i$, so $R'_i \subseteq V(G'_i)$. 
Since $m\geq 3$, we have $|V(G'_i)|<|V(G)|$. 

We now show the separator assumption is passed from $G$ to $G'_i$. Let $S$ be a set of $2\myalpha+2\ell$ vertices in $G'_i$. By assumption, there is a set $X$ of at most $\ell$ vertices in $G$ such that each component of $G-X$ has at most $\myalpha$ vertices in $S$. Each component of $G'_i-X$ is a subgraph of a component of $G-X$. So each component of $G'_i-X$ has at most $\myalpha$ vertices in $S$. 

By induction, there is a slick tree-decomposition $(B^i_x:x\in V(T_i))$ of $G'_i$ rooted at $r_i\in V(T)$ such that $R'_i\subseteq B_{r_i}$, and $|B^i_x|\leq 2\myalpha+3\ell$ for each $x\in V(T_i)$. Moreover, $\Delta(T_i)\leq 4+\ceil{\frac{4\ell}{\myalpha}}$ and $\deg_T(r_i)\leq 3+\ceil{\frac{4\ell}{\myalpha}}$.

Let $T$ be obtained from the disjoint union $T_1\cup\dots\cup T_m$ by adding one new node $r$ adjacent to $r_1,\dots,r_m$. 
Root $T$ at $r$. 
Let $B_r:= X\cup R$, so $|B_r|\leq 2\myalpha+3\ell$ and $R\subseteq B_r$, as desired. 
We now show that $(B_x:x\in V(T))$ is a tree-decomposition of $G$.
The vertex-property holds since any vertex in at least two of $G_1,\dots,G_m$ is also in $B_r$. 
Consider an edge $vw\in E(G)$. If $v,w\in X\cup R$ or $v,w\in V(G_i)$, then $v,w$ are in a common bag. Otherwise, $v\in X\cup R$ and $w$ is in some $G'_i$. Thus $v\in (R_i\cup X)\setminus R^-_i$ implying $v\in R'_i$. 
Hence $v$ and $w$ are in the bag $B_{r_i}$. 
So $(B_x:x\in V(T))$ is a tree-decomposition of $G$.
By construction, $\Delta(T)\leq 4+\ceil{\frac{4\ell}{\myalpha}}$ and $\deg_T(r)=m\leq 3+\ceil{\frac{4\ell}{\myalpha}}$. 

The slick property holds for every edge in $T_1\cup\dots\cup T_m$ by induction. 
Consider an edge $rr_i$ of $T$ and a vertex $v\in B_r\cap B_{r_i}$, for some $i\in\{1,\dots,m\}$. 
Thus $v\in R'_i$ and $v\not\in R^-_i$. 
Hence there is a vertex in $N_{G_i}(v)\setminus R_i$ which was added to $R'_i$, and is therefore in $B_{r_i}$. 
Hence $(B_x:x\in V(T))$ is slick.   
\end{proof}

The next theorem and \cref{SlickSpread} imply \cref{SpreadDegree} (which implies \cref{Spread}).

\begin{thm}
\label{SlickMain}
Every graph $G$ with treewidth at most $k$ has a slick tree-decomposition with width at most $14k+13$ and degree at most 6.
\end{thm}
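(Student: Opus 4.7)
The plan is to apply Lemma \ref{SpreadLemma} directly, after first verifying its hypothesis using \cref{TreewidthSetSep}. The key is to choose the parameters $\ell$ and $t$ so that the resulting width bound $2t+3\ell-1$ equals $14k+13$ and the degree bound $4+\lceil 4\ell/t\rceil$ equals $6$.

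The degree constraint $\lceil 4\ell/t\rceil\leq 2$ forces $t\geq 2\ell$, and in order to keep $\ell$ as small as possible (since $\ell$ is the number of separator vertices we can afford) we take $t=2\ell$. Then the width constraint becomes $7\ell=14k+14$, giving $\ell=2k+2$ and $t=4k+4$. So the plan is to show that every graph $G$ of treewidth at most $k$ satisfies the following: for every set $S\subseteq V(G)$ with $|S|=2t+2\ell=12k+12$, there is a set $X$ with $|X|\leq\ell=2k+2$ such that each component of $G-X$ has at most $t=4k+4$ vertices in $S$.

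To verify this separator property, I would invoke \cref{TreewidthSetSep} with $q=2$: this yields a set $X$ of at most $q(k+1)=2k+2$ vertices such that each component of $G-X$ contains at most $\tfrac{|S|}{q+1}=\tfrac{12k+12}{3}=4k+4$ vertices of $S$, which is exactly the bound we need. With the hypothesis of \cref{SpreadLemma} in hand, applying it with these values of $\ell$ and $t$ produces a slick tree-decomposition of width $2t+3\ell-1=8k+8+6k+6-1=14k+13$ and degree $4+\lceil 8\ell/(4\ell)\rceil=4+2=6$, which is precisely the claimed bound.

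This proof is essentially a one-step calculation once the right parameters are identified, so there is no substantive obstacle: all the real work has already been done in \cref{SpreadLemma,TreewidthSetSep}. The only thing worth checking carefully is that the target width $14k+13$ and target degree $6$ are simultaneously achievable, i.e., that the choice $\ell=2k+2$, $t=4k+4$ is consistent with both bounds; this is verified by the arithmetic above. The subsequent deduction of \cref{SpreadDegree}, and hence \cref{Spread}, then follows immediately by combining \cref{SlickMain} with \cref{SlickSpread}, which bounds the spread of each vertex $v$ in a slick tree-decomposition by $\deg_G(v)+1$.
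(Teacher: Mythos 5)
Your proposal is correct and follows essentially the same route as the paper: invoke \cref{TreewidthSetSep} with $q=2$ to establish the hypothesis of \cref{SpreadLemma} with $\ell=2(k+1)$ and $t=2\ell$, then read off the width bound $2t+3\ell-1=7\ell-1=14k+13$ and the degree bound $4+\ceil{4\ell/t}=6$. (There is a small typo in your last display, where $4\ell/t$ should appear as $4\ell/(2\ell)$ rather than $8\ell/(4\ell)$, but the value $2$ and the conclusion are unaffected.)
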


\begin{proof}
By \cref{TreewidthSetSep} with $q=2$, for every set $S\subseteq V(G)$ there is a set $X$ of at most $2(k+1)$ vertices such that each component of $G-X$ has at most $\frac{1}{3}|S|$ vertices in $S$. Let $\ell:=2(k+1)$. In particular, if $|S|=6\ell$ then there is a set $X$ of at most $\ell$ vertices in $G$, such that each component of $G-X$ has at most $2\ell$ vertices in $S$.
Hence \cref{SpreadLemma} is applicable with $\myalpha=2\ell$.  Therefore $G$ has a slick tree-decomposition with width at most $2\myalpha+3\ell-1=7\ell-1=14k+13$ and degree at most $4+\ceil{\frac{4\ell}{\myalpha}}=6$. 
\end{proof}

\section{Small Order}
\label{SmallOrder}

This section proves \cref{Small} showing that every graph has a tree-decomposition with small width and small order. 

\begin{lem}
\label{FindSubtree}
For every rooted tree $T$ and integer $k\in\{2,\dots, |V(T)|\}$, there is a rooted subtree $T'$ of $T$ such that $|V(T')|\in\{k,\dots,2k-2\}$ and the root of $T'$ is the only vertex of $T'$ possibly adjacent to vertices in $T-V(T')$.
\end{lem}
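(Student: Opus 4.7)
The plan is to find a vertex $v$ whose descendant-subtree in $T$ is large, but with each individual child's descendant-subtree small, and then either take that whole subtree or prune away some of $v$'s children to land in the target window.

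First, for each $u \in V(T)$ let $f(u)$ denote the number of descendants of $u$ (including $u$ itself). Starting at the root of $T$, where $f \geq k$ by hypothesis, I would repeatedly descend to any child $c$ with $f(c) \geq k$ while such a $c$ exists. This walk terminates at a vertex $v$ with $f(v) \geq k$ and $f(c) \leq k-1$ for every $T$-child $c$ of $v$. This $v$ will be the root of $T'$.

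Then I split on $f(v)$. If $f(v) \leq 2k-2$, I take $T'$ to be the subtree of $T$ induced by $v$ and all its descendants: the size is in $\{k,\dots,2k-2\}$ by choice of $v$, and the only vertex of $T'$ with a possible neighbour outside $T'$ is $v$ (its $T$-parent, if any). Otherwise $f(v) \geq 2k-1$. I enumerate the $T$-children $c_1,\dots,c_d$ of $v$ in any order and greedily add the full descendant-subtrees of $c_1,c_2,\dots$ to $\{v\}$, stopping the first time the running sum of added vertices reaches at least $k-1$. Such a stopping index exists because the total over all children is $f(v)-1 \geq 2k-2$; and because every increment $f(c_i)$ is at most $k-1$, the running sum cannot jump from below $k-1$ to above $2k-3$, so it lands in $[k-1,2k-3]$ and hence $|V(T')| = 1 + (\text{sum}) \in \{k,\dots,2k-2\}$. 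Since I added the \emph{full} descendant-subtree of each chosen $c_i$, every non-root vertex of $T'$ has all of its $T$-neighbours (parent and all children) inside $T'$, so only $v$ can have a $T$-neighbour outside $T'$.

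No substantive obstacle is expected. The argument is a walk-down followed by a greedy partial-sum step on child-subtree sizes, and the target window has length $k-1$, which matches the a priori bound $f(c_i) \leq k-1$ on the increments and thus forces the greedy sum to land inside the window. The only point worth a sentence is that $k \geq 2$ is used in Case 2 to ensure $2k-3 \geq k-1$, which is given by hypothesis.
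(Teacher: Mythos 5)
Your proof is correct and follows essentially the same approach as the paper: find a vertex $v$ with $|V(T_v)|\ge k$ whose children all have descendant-subtrees of size at most $k-1$, then greedily accumulate child subtrees until the total reaches $k-1$, using the bound on increments to land inside the window. The only cosmetic differences are that you locate $v$ by a walk-down rather than the paper's ``maximum distance from the root'' formulation (equivalent here), and your case split on $f(v)\le 2k-2$ versus $f(v)\ge 2k-1$ is unnecessary -- the paper's single greedy argument already handles the first case by taking all $d$ children, since the running sum before the last added child is at most $k-2$ and each increment is at most $k-1$, so the stopping value is at most $2k-3$ regardless of whether all children are used.
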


\begin{proof}
Let $r$ be the root of $T$. For each vertex $v$ of $T$, let $T_v$ be the subtree of $T$ induced by $v$ and the descendants of $v$. Let $v$ be a vertex in $T$ at maximum distance from $r$ such that $|V(T_v)|\geq k$. This is well-defined since $|V(T_r)|=|V(T)| \geq k$. Let $w_1,\dots,w_d$ be the children of $v$. So $d\geq 1$, since $|V(T_v)|\geq k\geq 2$. By the choice of $v$, $|V(T_{w_i})|\leq k-1$ for each $i\in\{1,\dots,d\}$, and $\sum_{i=1}^d|V(T_{w_i})|\geq k-1$. 
There exists a minimum integer $c\in\{1,\dots,d\}$ such that 
$\sum_{i=1}^c|V(T_{w_i})|\geq k-1$. So 
$\sum_{i=1}^{c-1}|V(T_{w_i})|\leq k-2$ and
$\sum_{i=1}^{c}|V(T_{w_i})|\leq 2k-3$.
Let $T':= T[ \bigcup_{i=1}^{c} V(T_{w_i})\cup\{v\}]$.
So $|V(T')|\in\{k,\dots,2k-2\}$. 
By construction, $v$ is the root of $T'$, and $v$ is the only vertex in $T'$ possibly adjacent to vertices in $T-V(T')$.
\end{proof}

Let $T$ be a tree rooted at a vertex $r\in V(T)$. As illustrated in \cref{TreeDivision}, a \defn{division} of $T$ is a sequence $(T_1,\dots,T_m)$ of pairwise edge-disjoint subtrees of $T$ such that: \begin{itemize}
\item $T=T_1\cup\dots\cup T_m$, 
\item $r\in V(T_1)$, 
\item for $i\in\{2,\dots,m\}$, 
if $r_i$ is the root of $T_i$ then $V(T_i)\cap V(T_1\cup\dots\cup T_{i-1})=\{r_i\}$.
\end{itemize}

\begin{figure}[ht]
\psset{unit=10mm}
\begin{pspicture}(-2,3.9)(13,9.2)

\rput(4.5,9){$r$}
		
\pspolygon[linecolor=blue,fillcolor=yellow,fillstyle=solid](5,9)(3,7)(7,7)
\pscircle[linewidth=0.5pt,linecolor=darkgray,fillcolor=lime,fillstyle=solid](5,9){0.1}
\rput(5,7.5){$T_1$}

\pspolygon[linecolor=blue,fillcolor=yellow,fillstyle=solid](3,7)(2,5)(4,5)
\pscircle[linewidth=0.5pt,linecolor=darkgray,fillcolor=lime,fillstyle=solid](3,7){0.1}
\rput(3,5.5){$T_2$}

\pspolygon[linecolor=blue,fillcolor=yellow,fillstyle=solid](6,8)(9.5,6)(12.5,6)
\pscircle[linewidth=0.5pt,linecolor=darkgray,fillcolor=lime,fillstyle=solid](6,8){0.1}
\rput(9.35,6.5){$T_3$}

\pspolygon[linecolor=blue,fillcolor=yellow,fillstyle=solid](6,7)(5,4)(6.5,4)
\pscircle[linewidth=0.5pt,linecolor=darkgray,fillcolor=lime,fillstyle=solid](6,7){0.1}
\rput(5.75,4.75){$T_4$}

\pspolygon[linecolor=blue,fillcolor=yellow,fillstyle=solid](4.5,7)(4,5.5)(5,5.5)
\pscircle[linewidth=0.5pt,linecolor=darkgray,fillcolor=lime,fillstyle=solid](4.5,7){0.1}
\rput(4.5,6){$T_{12}$}

\pspolygon[linecolor=blue,fillcolor=yellow,fillstyle=solid](2.45,6)(-1,4)(1,4)
\pscircle[linewidth=0.5pt,linecolor=darkgray,fillcolor=lime,fillstyle=solid](2.45,6){0.1}
\rput(0.8,4.5){$T_5$}

\pspolygon[linecolor=blue,fillcolor=yellow,fillstyle=solid](6,7)(7,5)(9,5)
\pscircle[linewidth=0.5pt,linecolor=darkgray,fillcolor=lime,fillstyle=solid](6,7){0.1}
\rput(7.5,5.5){$T_6$}

\pspolygon[linecolor=blue,fillcolor=yellow,fillstyle=solid](3,5)(2,4)(4,4)
\pscircle[linewidth=0.5pt,linecolor=darkgray,fillcolor=lime,fillstyle=solid](3,5){0.1}
\rput(3,4.35){$T_7$}

\pspolygon[linecolor=blue,fillcolor=yellow,fillstyle=solid](8,5)(7.25,4)(8.75,4)
\pscircle[linewidth=0.5pt,linecolor=darkgray,fillcolor=lime,fillstyle=solid](8,5){0.1}
\rput(8,4.35){$T_8$}

\pspolygon[linecolor=blue,fillcolor=yellow,fillstyle=solid](10.75,6)(9,4)(10.5,4)
\pscircle[linewidth=0.5pt,linecolor=darkgray,fillcolor=lime,fillstyle=solid](10.75,6){0.1}
\rput(10,4.35){$T_9$}

\pspolygon[linecolor=blue,fillcolor=yellow,fillstyle=solid](4,8)(-1,6)(1.2,6)
\pscircle[linewidth=0.5pt,linecolor=darkgray,fillcolor=lime,fillstyle=solid](4,8){0.1}
\rput(1,6.25){$T_{10}$}

\pspolygon[linecolor=blue,fillcolor=yellow,fillstyle=solid](10.75,6)(11,4)(12.5,4)
\pscircle[linewidth=0.5pt,linecolor=darkgray,fillcolor=lime,fillstyle=solid](10.75,6){0.1}
\rput(11.4,4.35){$T_{11}$}

\end{pspicture}
   \caption{Example of a tree division.}
    \label{TreeDivision}
\end{figure}
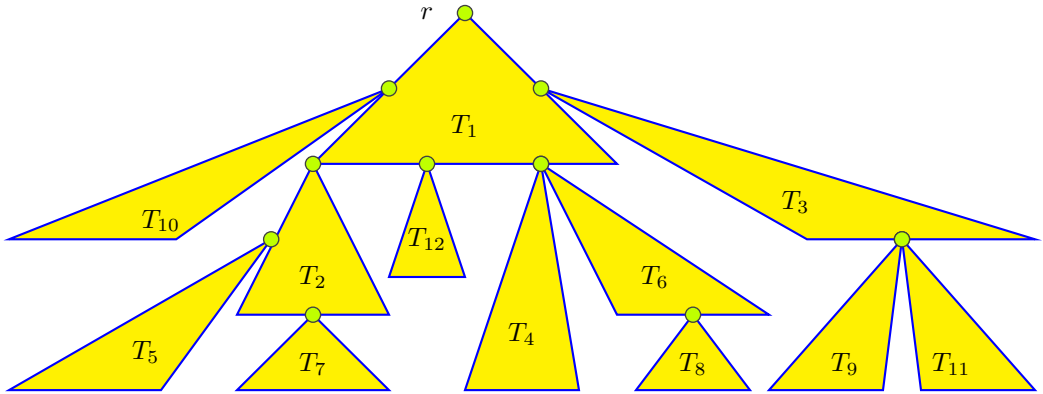

\begin{lem}
\label{PartitionTree}
For any integer $k\geq 2$, every rooted tree $T$ with $|V(T)|\geq k$ has a division $(T_1,\dots,T_m)$ such that $m\leq \frac{|V(T)|}{k-1}$, and
$|V(T_i)|\in\{k,\dots,2k-2\}$  for each $i\in\{1,\dots,m\}$. 
\end{lem}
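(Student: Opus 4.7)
The plan is to proceed by induction on $|V(T)|$, peeling off the final tree of the division using \cref{FindSubtree} and recursing on what remains.

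For the base case, suppose $k\leq |V(T)|\leq 2k-2$; then the singleton division $(T_1)=(T)$ satisfies both requirements, since $|V(T)|\in\{k,\dots,2k-2\}$ and $m=1\leq\frac{|V(T)|}{k-1}$.

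For the inductive step, assume $|V(T)|\geq 2k-1$. Apply \cref{FindSubtree} to $T$ to obtain a subtree $T_m$ of $T$ with $|V(T_m)|\in\{k,\dots,2k-2\}$ whose root $r_m$ is the only vertex of $T_m$ adjacent in $T$ to vertices outside $V(T_m)$. Let $T^*:=T-(V(T_m)\setminus\{r_m\})$: this is a subtree of $T$ of size $|V(T)|-|V(T_m)|+1$ still containing the original root $r$. Apply the induction hypothesis to $T^*$, obtaining a division $(T_1,\dots,T_{m-1})$ of $T^*$. Concatenating, $(T_1,\dots,T_{m-1},T_m)$ is the desired division of $T$: the interface condition $V(T_m)\cap V(T_1\cup\dots\cup T_{m-1})=V(T_m)\cap V(T^*)=\{r_m\}$ is forced by the separation property guaranteed by \cref{FindSubtree}, and $r_m$ is the root of $T_m$ by construction.

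The count bound is then automatic:
\[
m \leq \frac{|V(T^*)|}{k-1}+1 = \frac{|V(T)|-|V(T_m)|+k}{k-1} \leq \frac{|V(T)|}{k-1},
\]
using $|V(T_m)|\geq k$. The main obstacle will be the narrow residual range $|V(T)|\in\{2k-1,\dots,3k-4\}$, where the specific $T_m$ returned by \cref{FindSubtree} may force $|V(T^*)|<k$ and the induction hypothesis does not directly apply; this case must be handled by a direct ad hoc construction, for instance by a two-tree split of $T$ that exploits how the subtree $T_m$ attaches to the rest of $T$ via the single vertex $r_m$.
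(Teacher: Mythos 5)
Your approach mirrors the paper's exactly: induct on $|V(T)|$, apply \cref{FindSubtree} to peel off a terminal subtree $T_m$, and recurse on $T^*:=T-(V(T_m)\setminus\{r_m\})$. Your count bound and the verification of the division conditions are both correct.

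The gap you flag at the end is genuine, and --- this is worth stressing --- it is present in the paper's own proof as well. The paper's base case only treats $|V(T)|=k$, and the inductive step invokes the hypothesis on $T''$ without checking $|V(T'')|\geq k$. Since $|V(T'')|=|V(T)|-|V(T')|+1$ and \cref{FindSubtree} only guarantees $|V(T')|\leq 2k-2$, one gets $|V(T'')|\geq k$ only when $|V(T)|\geq 3k-3$, leaving exactly the window $\{2k-1,\dots,3k-4\}$ you identify uncovered. However, your proposed fix (``a two-tree split'') cannot work in general, and the problem is deeper than a missing case: for $k=4$, take $T$ to be the spider with centre $r$ and three pendant paths of length $2$, so $|V(T)|=7=2k-1$. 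Any subtree $T_1\ni r$ that is edge-disjoint from its complement must contain each leg entirely or not at all, so $|V(T_1)|\in\{1,3,5,7\}$; hence no $(T_1,T_2)$ with $|V(T_1)|+|V(T_2)|=8$ and both sizes in $\{4,5,6\}$ exists, while $m\leq 7/3$ forbids $m\geq 3$. So no division meeting the stated size range and $m$-bound exists at all, meaning the lemma as written needs its constants or size range adjusted (e.g.\ widening the interval to $\{k-1,\dots,2k-2\}$ repairs this example), not just a patched inductive step. You should therefore not expect an ad hoc argument to rescue the proof in the problematic range; the statement itself has to change.
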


\begin{proof}
We proceed by induction on $|V(T)|$ with $k$ fixed. 
If $|V(T)|=k$ then the claim holds with $T_1:=T$ and $m:=1$.
Now assume that $|V(T)|\geq k+1$. 
By \cref{FindSubtree}, 
there is a rooted subtree $T'$ of $T$ such that $|V(T')|\in\{k,\dots,2k-2\}$ and the root $v$ of $T'$ is the only vertex in $T'$ possibly adjacent to vertices in $T-V(T')$.
Let $T'':= T-(V(T')\setminus\{v\})$, which is a subtree of $T$ with at most $|V(T)|-(k-1)$ vertices, and $r\in V(T'')$. 
By induction, $T''$ has a division  $(T_1,\dots,T_{m'})$ such that $m' \leq \frac{|V(T)|-(k-1)}{k-1}$, and $|V(T_i)|\in\{k,\dots,2k-2\}$ for each $i\in\{1,\dots,m'\}$. 
Let $m:=m'+1\leq \frac{|V(T)|}{k-1}$.
Let $T_m:=T'$.
So $(T_1,\dots, T_m)$ is a division of $T$, 
and $|V(T_i)|\in\{k,\dots,2k-2\}$  for each $i\in\{1,\dots,m\}$.
\end{proof}

Let $(B_x:x\in V(T))$ be a tree-decomposition of a graph $G$, where $T$ is a tree rooted at $r\in V(T)$. Let $(T_1,\dots,T_m)$ be a division of $T$, where $T_i$ is rooted at $r_i$. Let $F$ be a tree with vertex-set $\{1,\dots,m\}$, rooted at vertex $1$, where for $i\in\{2,\dots,m\}$, the parent of $i$ is any number $\alpha\in\{1,\dots,i-1\}$ such that $r_i\in V(T_\alpha)$. This is well-defined by the third property of division. 
Let $C_1:=\bigcup\{B_x:x\in V(T_1)\}$, and for $i\in \{2,\dots,m\}$, 
let $C_i:= \bigcup\{B_x:x\in V(T_i)\setminus\{r_i\}\}$. 
Then $(C_i:i\in V(F))$ is called the \defn{quotient} of 
$(B_x:x\in V(T))$ with respect to $(T_1,\dots,T_m)$. 

\begin{lem}
\label{QuotientTreeDecomp}
Under the above definitions, the quotient $(C_i:i\in V(F))$ is a tree-decomposition of $G$. 
\end{lem}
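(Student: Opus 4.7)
The plan is to verify the two nontrivial tree-decomposition axioms (edge coverage and connectivity of the subtree induced by each vertex) using a projection map $\phi : V(T) \to V(F)$. Define $\phi(x)$ to be the smallest index $i$ such that $x \in V(T_i)$, which is well-defined since $T = T_1 \cup \dots \cup T_m$. The key preliminary observation is that $B_x \subseteq C_{\phi(x)}$ for every $x \in V(T)$: when $\phi(x) = 1$ this is immediate, and when $\phi(x) = i \geq 2$ the third clause of the definition of a division gives $r_i \in V(T_1 \cup \dots \cup T_{i-1})$, so $\phi(r_i) < i = \phi(x)$, forcing $x \neq r_i$ and hence $B_x \subseteq C_i$.

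Edge coverage follows at once: if $vw \in E(G)$ and $v,w \in B_x$ for some $x \in V(T)$, then $v,w \in C_{\phi(x)}$. For connectivity, fix $v \in V(G)$, let $S_v := \{x \in V(T) : v \in B_x\}$, and let $I_v := \{i \in V(F) : v \in C_i\}$. First I would show $I_v = \phi(S_v)$. The inclusion $\phi(S_v) \subseteq I_v$ is immediate from the preliminary observation; for the reverse, if $i \in I_v$ then some $x \in S_v$ lies in $V(T_i) \setminus \{r_i\}$ (or in $V(T_1)$ if $i=1$), and if $\phi(x)$ were some $l < i$ then $x \in V(T_l) \cap V(T_i)$ with $l < i$ would force $x = r_i$ by the division property, contradicting the choice of $x$. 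Thus $\phi(x) = i$, and $i \in \phi(S_v)$.

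The remaining task is to show that $\phi(S_v)$ induces a connected subtree of $F$. Since $S_v$ is connected in $T$, it suffices to prove that whenever $xy \in E(T)$ with $x, y \in S_v$, the indices $\phi(x)$ and $\phi(y)$ are equal or adjacent in $F$. Let $p$ be the unique index with $xy \in E(T_p)$; then both $\phi(x), \phi(y) \leq p$, and if both were strictly less than $p$ the division property would force $x = y = r_p$, a contradiction. So at least one equals $p$; in the non-trivial subcase, say $\phi(y) = p$ and $\phi(x) = a < p$, the division property identifies $x$ with $r_p$, giving $a = \phi(r_p)$.

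The one subtle point, which I regard as the main obstacle, is the non-uniqueness of the parent $\alpha$ of $i$ in the definition of $F$: the previous paragraph only produces adjacency of $\phi(r_p) = a$ with $p$ when the parent of $p$ in $F$ is taken to be $\phi(r_p)$. I would handle this by choosing, for each $i \geq 2$, the parent of $i$ to be $\alpha := \phi(r_i)$. A direct application of the division property to $T_\alpha$ shows this is the unique choice with $\alpha = 1$ or $r_i \neq r_\alpha$, and this is precisely the choice that makes $\phi$ carry $T$-adjacencies to $F$-adjacencies, completing the verification.
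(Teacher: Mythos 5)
Your argument is correct and follows the same overall strategy as the paper: both define $\phi(x) := \min\{i : x \in V(T_i)\}$ (the paper writes $i(x)$) and use the inclusion $B_x \subseteq C_{\phi(x)}$. For connectivity, the paper argues by contradiction via the roots of two components of $Y_v := \{i : v \in C_i\}$, while you identify $Y_v = \phi(S_v)$ and check that $\phi$ carries $T$-adjacencies to $F$-adjacencies or equalities; these are essentially the same argument cast differently, with your version arguably cleaner.

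The genuinely valuable part of your proposal is the last paragraph: you are right that the parent $\alpha$ of $i$ in $F$ must be taken to be $\phi(r_i)$, and the lemma as literally stated fails for other valid choices. For a concrete failure, let $T$ be a star rooted at its centre $a$ with leaves $b,c,d$, take the division $T_1 = T[\{a,d\}]$, $T_2 = T[\{a,c\}]$, $T_3 = T[\{a,b\}]$ (so $r_2 = r_3 = a$), and set the parent of $3$ to be $2$, so that $F$ is the path with edges $12$ and $23$; then any vertex $v$ with $\{x : v \in B_x\} = \{a,b,d\}$ (a perfectly good connected subtree of $T$) has $Y_v = \{1,3\}$, which is disconnected in $F$. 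The paper's own proof silently needs the same restriction: the step ``By construction, $r_i \in V(T_\alpha)$. So $v \notin B_{r_i}$'' uses that $r_i \in V(T_\alpha)\setminus\{r_\alpha\}$ when $\alpha \geq 2$ (because $C_\alpha$ omits $B_{r_\alpha}$), and that holds exactly when $\alpha = \phi(r_i)$. So your proof is sound, and you have also caught a real, if small and easily repaired, imprecision in the paper's definition of $F$; it is worth flagging to the author.
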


\begin{proof}
For each node $x\in V(T)$, let $i(x):=\min\{i\in\{1,\dots,m\}:x\in V(T_i)\}$. Note that each node $x\in V(T)\setminus\{r\}$ is not the root of $T_{i(x)}$, since $r$ is the root of $T_1$, and if $i(x)\geq 2$ then $r_{i(x)}$ is in some tree $T_j$ with $j<i(x)$. 

We now prove that  $(C_i:i\in V(F))$ has the edge-property of tree-decompositions. For each edge $vw\in E(G)$ there is a node $x\in V(T)$ with $v,w\in B_x$. If $x=r$ then $v,w\in C_1$, as desired. If $x\neq r$, then $x$ is not the root of $T_{i(x)}$, implying $v,w\in C_{i(x)}$, as desired. 

We now prove that  $(C_i:i\in V(F))$ has the vertex-property of tree-decompositions. Consider a vertex $v\in V(G)$. 
Let $Y_v$ be the subgraph of $F$ induced by $\{i\in V(F):v\in C_i\}$.
We first show that $Y_v$ is non-empty. 
There is a node $x\in V(T)$ with $v\in B_x$. 
If $x=r$ then $v\in C_1$, as desired. 
If $x\neq r$, then $x$ is not the root of $T_{i(x)}$, implying $v\in C_{i(x)}$, as desired. So $Y_v$ is non-empty. 
We now show that  $Y_v$ is connected. 
Suppose that $Y_v$ is disconnected. 
Let $i$ and $j$ be the root vertices of distinct components of $Y_v$. 
Without loss of generality, $1\leq j<i$. 
Since $i$ is in $Y_v$ and $i\geq 2$, 
there is a node $x$ in $T_{i}-r_i$ with $v\in B_{x}$. 
Similarly, since $j$ is in $Y_v$, 
there is a node $y$ in $T_{j}$ with $v\in B_{y}$. 
Since $Y_v$ is an induced subgraph of $F$, and $j<i$, 
the parent $\alpha$ of $i$ is on the $ij$-path in $F$. 
Since $Y_v$ is an induced subgraph of $F$, and $i$ is the root of its component, $\alpha$ is not in $Y_v$. 
By construction, $r_{i}\in V(T_\alpha)$. 
So $v\not\in B_{r_{i}}$.
Since $\alpha$ is on the $ij$-path in $F$, 
$r_{i}$ is on the $xy$-path in $T$, which contradicts the vertex-property for the tree-decomposition $(B_x:x\in V(T))$ for vertex $v$. Thus $Y_v$ is connected. 

 So $(C_i:i\in V(F))$ is a tree-decomposition of $G$. 
\end{proof}


\begin{thm}
\label{SmallTreeDecomp}
For any graph $G$ and integer $k\geq\max\{\tw(G),1\}$, there is a tree-decomposition of $G$ with width at most $3k-1$ and order at most 
$\max\{\frac{|V(G)|}{k}-1,1\}$.
\end{thm}

\begin{proof}
If $|V(G)| \leq 2k$ then the tree-decomposition with one bag $V(G)$ satisfies the claim. Now assume that $|V(G)|>2k$. It is well-known that $G$ has a tree-decomposition $(B_x:x\in V(T))$ with width $k$ such that $|V(T)|=|V(G)|-k$, and $|B_x\setminus B_y|=|B_y\setminus B_x|=1$ for each edge $xy\in E(T)$ (see \citep{DvoWoo} for a proof). Root $T$ at an arbitrary node $r\in V(T)$. For each non-root node $x\in V(T)$ with parent $y\in V(T)$, there is exactly one vertex $v_x$ in $B_x\setminus B_y$. 
By \cref{PartitionTree} (applied with $k+1$), 
$T$ has a division $(T_1,\dots,T_m)$ such that $m\leq \frac{|V(T)|}{k} = \frac{|V(G)|-k}{k}$, and $|V(T_i)|\in\{k+1,\dots,2k\}$ for each $i\in\{1,\dots,m\}$. By \cref{QuotientTreeDecomp}, the quotient 
 $(C_i:i\in V(F))$ of $(B_x:x\in V(T))$ with respect to $(T_1,\dots,T_m)$ is a tree-decomposition of $G$. 
 For each $i\in V(F)$, $C_i$ is contained in the union of $B_{r_i}$ and the set of vertices $v_x$ where $x$ is a non-root vertex in $T_i$. So $|C_i|\leq (k+1)+|V(T_i)|-1\leq 3k$. Hence, $(C_i:i\in V(F))$ is a tree-decomposition of $G$ with width at most $3k-1$, where $|V(F)|=m\leq \frac{|V(G)|}{k}-1$.
\end{proof}

\section{Small Spread and Order}
\label{SmallSpreadOrder}

This section combines the previous proof methods to establish  \cref{SpreadAndSmall}, which shows that every graph has a tree-decomposition with small width, small spread, and few bags. We start with a weighted version of \cref{FindSubtree}. 

%

\begin{lem}
\label{FindWeightedSubtreePlus}
Let $T$ be a rooted tree with weighting $\gamma:V(T)\rightarrow\{1,2,\dots,k-1\}$ for some integer $k\geq 2$ with $\gamma(T)\geq 2k-2$.
Then there is a subtree $T'$ of $T$ rooted at some vertex $v$ such that:
\begin{itemize}
\item $\gamma(T')\in\{k,\dots,4k-6\}$, 
\item $v$ is the only vertex of $T'$ possibly adjacent to vertices in $T-V(T')$, 
\item $\gamma(T'-v)\in\{k-1,\dots,3k-5\}$.
\end{itemize}
\end{lem}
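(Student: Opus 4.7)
The plan is to mirror the proof of \cref{FindSubtree}, replacing the counting measure by the weighting $\gamma$ and exploiting that $\gamma(v)\leq k-1$ for every vertex. First, I would use the given root $r$ of $T$, and for each vertex $u$ let $T_u$ denote the subtree of $T$ induced by $u$ together with its descendants. Choose $v$ to be a vertex at maximum distance from $r$ with $\gamma(T_v)\geq 2k-2$; this is well defined since $\gamma(T_r)=\gamma(T)\geq 2k-2$ by hypothesis. The maximality of $\dist(r,v)$ then forces $\gamma(T_w)\leq 2k-3$ for every child $w$ of $v$.

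Next, let $w_1,\dots,w_d$ be the children of $v$. Then
\[
\sum_{i=1}^d \gamma(T_{w_i}) \;=\; \gamma(T_v)-\gamma(v) \;\geq\; (2k-2)-(k-1)\;=\;k-1,
\]
so there is a minimum $c\in\{1,\dots,d\}$ with $\sum_{i=1}^c\gamma(T_{w_i})\geq k-1$. Minimality gives $\sum_{i=1}^{c-1}\gamma(T_{w_i})\leq k-2$, and combined with $\gamma(T_{w_c})\leq 2k-3$ this yields $\sum_{i=1}^c\gamma(T_{w_i})\leq 3k-5$. I would then define $T' := T[\{v\}\cup\bigcup_{i=1}^{c} V(T_{w_i})]$, rooted at $v$. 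Since $T'$ consists of $v$ together with complete subtrees of $T$ hanging off $v$, the vertex $v$ is the only vertex of $T'$ possibly adjacent to a vertex of $T-V(T')$. By the previous bounds, $\gamma(T'-v)\in\{k-1,\dots,3k-5\}$, and adding $\gamma(v)\in\{1,\dots,k-1\}$ gives $\gamma(T')\in\{k,\dots,4k-6\}$, as required.

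The only subtle point is the upper bound $3k-5$ on $\gamma(T'-v)$: it depends critically on the maximum-distance choice of $v$, which caps each child subtree's weight at $2k-3$, so that greedily adding children cannot overshoot $k-1$ by more than $2k-4$. Without that choice a single child subtree could overshoot the target window arbitrarily, so this is the one place where the weighted version goes beyond the direct analogue of \cref{FindSubtree}; everything else is bookkeeping.
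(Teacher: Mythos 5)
Your proof is correct and follows essentially the same strategy as the paper: pick a deepest vertex $v$ whose descendant subtree is sufficiently heavy, then greedily collect child subtrees. The only (cosmetic) difference is that you use the vertex-independent threshold $\gamma(T_v)\geq 2k-2$ where the paper uses $\gamma(T_v)\geq k-1+\gamma(v)$; both choices cap each child subtree at $2k-3$ and yield $\gamma(T_v)-\gamma(v)\geq k-1$, so the rest of the argument is identical.
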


\begin{proof}
Let $r$ be the root of $T$. For each vertex $v$ of $T$, let $T_v$ be the subtree of $T$ induced by $v$ and the descendants of $v$. Let $v$ be a vertex in $T$ at maximum distance from $r$ such that $\gamma(T_v)\geq k-1+\gamma(v)$. This is well-defined since $$\gamma(T_r)=\gamma(T)\geq 2k-2 \geq k-1 + \gamma(r).$$
Since $\gamma(T_v)\geq k-1+\gamma(v)\geq k$ and $\gamma(v)\leq k-1$, $v$ is not a leaf of $T$. Let $w_1,\dots,w_d$ be the children of $v$, where $d\geq 1$. 
By the choice of $v$,  for each $i\in\{1,\dots,d\}$, 
$$\gamma(T_{w_i}) \leq k-2+\gamma(w_i) \leq 2k-3,$$
and $$\sum_{i=1}^d \gamma(T_{w_i}) = \gamma(T_v)-\gamma(v) \geq k-1.$$ 
There exists a minimum integer $c\in\{1,\dots,d\}$ such that 
$\sum_{i=1}^c\gamma(T_{w_i})\geq k-1$. 
Let $T':= T[ \bigcup_{i=1}^{c} V(T_{w_i})\cup\{v\}]$.
Note that 
$\gamma(T'-v) = \sum_{i=1}^c \gamma(T_{w_i}) \geq k-1$. 
For an upper bound, by the choice of $c$, 
$$\gamma(T'-v)=\gamma(T_{w_c}) + \sum_{i=1}^{c-1}\gamma(T_{w_i}) \leq
(2k-3) + (k-2) \leq 3k-5.$$
Together these bounds show that
$$\gamma(T')=\gamma(T'-v)+\gamma(T)\in 
\{k-1+\gamma(v),\dots,3k-5+\gamma(v)\} 
\subseteq \{k,\dots,4k-6\}.$$
By construction, $v$ is the root of $T'$, and $v$ is the only vertex in $T'$ possibly adjacent to vertices in $T-V(T')$. 
\end{proof}

The next lemma is a weighted analogue of \cref{PartitionTree}.

\begin{lem}
\label{PartitionWeightedTree}
Let $T$ be a rooted tree with weighting $\gamma:V(T)\rightarrow \{1,\dots,k-1\}$ for some integer $k\geq 2$ with $\gamma(T)\geq 2k-2$. 
Then $T$ has a division $(T_1,\dots,T_m)$ such that:
\begin{itemize}
    \item $m\leq \frac{\gamma(T)}{k-1}$, 
    \item for each $i\in\{1,\dots,m\}$, $\gamma(T_i)\in\{k,\dots,5k+2\}$, 
    \item for each $i\in\{2,\dots,m\}$, 
    if $r_i$ is the root of $T_i$, then $\gamma(T_i-r_i)\in\{k-1,\dots,3k-5\}$.
\end{itemize}
\end{lem}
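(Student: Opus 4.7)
My plan is to induct on $\gamma(T)$ with $k$ fixed, directly paralleling the unweighted proof of \cref{PartitionTree} but with \cref{FindWeightedSubtreePlus} replacing \cref{FindSubtree} as the peeling tool. The base case I would take is $\gamma(T)\leq 5k+2$: here the trivial division $(T_1):=(T)$ of length $m=1$ works, since the third bulleted condition is vacuous (it only constrains $i\geq 2$), $\gamma(T_1)=\gamma(T)\in\{2k-2,\dots,5k+2\}\subseteq\{k,\dots,5k+2\}$ because $2k-2\geq k$ for $k\geq 2$, and $m=1\leq\gamma(T)/(k-1)$ because $\gamma(T)\geq 2(k-1)$.

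For the inductive step, suppose $\gamma(T)\geq 5k+3$. I would apply \cref{FindWeightedSubtreePlus} to obtain a subtree $T'$ rooted at some $v\in V(T)$ with $\gamma(T')\in\{k,\dots,4k-6\}$ and $\gamma(T'-v)\in\{k-1,\dots,3k-5\}$, such that $v$ is the only vertex of $T'$ possibly adjacent to $T-V(T')$. Then $T'':=T-(V(T')\setminus\{v\})$ is a subtree of $T$ containing the root $r$ and meeting $T'$ only at $v$, with weight $\gamma(T'')=\gamma(T)-\gamma(T'-v)\geq(5k+3)-(3k-5)=2k+8\geq 2k-2$. Thus the inductive hypothesis applies to $T''$ (with the weighting restricted to $V(T'')\subseteq V(T)$), yielding a division $(T_1,\dots,T_{m'})$ of $T''$ with $m'\leq\gamma(T'')/(k-1)$ and satisfying the three bulleted conditions.

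Setting $T_m:=T'$ and $m:=m'+1$ should produce the desired division of $T$. Since $v\in V(T'')$, the division properties for $(T_1,\dots,T_{m'})$ place $v$ in some $T_j$ with $j\leq m'$, and $V(T_m)\cap V(T_1\cup\cdots\cup T_{m-1})=V(T')\cap V(T'')=\{v\}$, so $v$ is a valid root for the new piece. The count bound $m\leq\gamma(T'')/(k-1)+1\leq(\gamma(T)-(k-1))/(k-1)+1=\gamma(T)/(k-1)$ follows because $\gamma(T'-v)\geq k-1$; and $T_m$ inherits $\gamma(T_m)\in\{k,\dots,4k-6\}\subseteq\{k,\dots,5k+2\}$ and $\gamma(T_m-v)\in\{k-1,\dots,3k-5\}$ directly from \cref{FindWeightedSubtreePlus}.

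The heavy lifting is really done already in \cref{FindWeightedSubtreePlus}; the only step requiring care is choosing the base-case threshold. I expect the main obstacle (if any) to be calibrating this threshold: the value $5k+2$ is picked precisely so that after peeling off at most $3k-5$ units of weight, the residual tree $T''$ still carries at least $2k-2$ weight and so remains eligible for the inductive hypothesis. A smaller threshold would leave the possibility of a residual tree too light to recurse on, and a larger one would inflate the upper bound on $\gamma(T_i)$ beyond what \cref{SpreadAndSmall} needs downstream.
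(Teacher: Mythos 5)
Your proposal is correct and follows essentially the same route as the paper: induct on $\gamma(T)$ with base case $\gamma(T)\leq 5k+2$, peel off $T'$ via \cref{FindWeightedSubtreePlus}, recurse on $T''$, and append $T'$ as the final piece. (Minor note: you correctly compute $\gamma(T'')\geq 2k+8\geq 2k-2$; the paper's displayed arithmetic writes $2k-2$ where $2k+8$ is the actual value, but this is harmless since only the lower bound $2k-2$ is needed to invoke the inductive hypothesis.)
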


\begin{proof}
We proceed by induction on $\gamma(T)$ with $k$ fixed. 
If $\gamma(T)\leq 5k+2$ then the claim holds with $T_1:=T$ and $m:=1$.
Now assume that $\gamma(T)\geq 5k+3$. 
By \cref{FindWeightedSubtreePlus}, 
there is a subtree $T'$ of $T$ rooted at some vertex $v$ such that:
\begin{itemize}
\item $\gamma(T')\in\{k,\dots,4k-6\}$, 
\item $v$ is the only vertex of $T'$ possibly adjacent to vertices in $T-V(T')$, 
\item $\gamma(T'-v)\in\{k-1,\dots,3k-5\}$.
\end{itemize}
Let $T'':= T-(V(T')\setminus\{v\})$, which is a subtree of $T$ with $r\in V(T'')$. Note that 
\begin{align*}
\gamma(T'') & = \gamma(T)-\gamma(T'-v)\leq\gamma(T)-(k-1) \text{ and}\\
\gamma(T'') & =\gamma(T)-\gamma(T'-v) \geq (5k+3)-(3k-5) \geq 2k-2.
\end{align*}
By induction, $T''$ has a division $(T_1,\dots,T_{m'})$ such that:
\begin{itemize}
    \item $m' \leq \frac{\gamma(T'')}{k-1} \leq \frac{\gamma(T)-(k-1)}{k-1}$, 
    \item for each $i\in\{1,\dots,m'\}$, $\gamma(T_i)\in\{k,\dots,5k+2\}$, 
 \item for each $i\in\{2,\dots,m'\}$, 
    if $r_i$ is the root of $T_i$, then $\gamma(T_i-r_i)\in\{k-1,\dots,3k-5\}$.
\end{itemize}
Let $m:=m'+1\leq \frac{\gamma(T)}{k-1}$.
Let $T_m:=T'$.
So $(T_1,\dots, T_m)$ is a division of $T$. The claimed properties hold since $v$ is the root of $T'$, and thus $\gamma(T_m-r_m)=\gamma(T'-v)\in \{k-1,\dots,3k-5\}$.
\end{proof}

\begin{lem}
\label{MakeSmall}
For any integer $\ell\geq 2$, if a graph $G$ with at least $2\ell-2$ vertices has a slick tree-decomposition $(B_x:x\in V(T))$ with width at most $\ell-2$, then $G$ has a slick tree-decomposition $(C_x:x\in V(F))$ with width at most $4\ell-7$ and order at most $\frac{|V(G)|}{\ell-1}$.
\end{lem}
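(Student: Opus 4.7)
The plan is to apply \cref{PartitionWeightedTree} to $T$ under a natural weighting and then pass to the quotient tree-decomposition of \cref{QuotientTreeDecomp}. Root the given slick decomposition at any $r\in V(T)$ and define the new-vertex weight $\gamma(x):=|B_x\setminus B_{\mathrm{parent}(x)}|$ for $x\ne r$ and $\gamma(r):=|B_r|$. Slickness forces $\gamma(x)\ge 1$ for every $x\ne r$ (otherwise $B_x\subseteq B_{\mathrm{parent}(x)}$ and any vertex of $B_x$ would violate the slick condition at that edge), and the width bound gives $\gamma(x)\le\ell-1$. Thus $\gamma$ takes values in $\{1,\dots,k-1\}$ for $k:=\ell$, and $\gamma(T)=|V(G)|\ge 2\ell-2=2k-2$. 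Applying \cref{PartitionWeightedTree} yields a division $(T_1,\dots,T_m)$ with $m\le |V(G)|/(\ell-1)$ and $\gamma(T_i-r_i)\le 3\ell-5$ for each $i\ge 2$.

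Let $(C_i:i\in V(F))$ be the corresponding quotient; by \cref{QuotientTreeDecomp} it is already a tree-decomposition of $G$. A short connectivity argument shows that every $v\in C_i$ either has its first appearance in $T_i-r_i$ (contributing to $\gamma(T_i-r_i)$) or lies in $B_{r_i}$, so for $i\ge 2$
\[
|C_i|\le \gamma(T_i-r_i)+|B_{r_i}|\le (3\ell-5)+(\ell-1)=4\ell-6,
\]
and $|C_1|=\gamma(T_1)$. Because \cref{PartitionWeightedTree} only bounds $\gamma(T_1)\le 5\ell+2$, I further split $T_1$ by repeated application of \cref{FindWeightedSubtreePlus} until $\gamma(T_1)\le 4\ell-6$. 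Each extra split removes weight at least $\ell-1$ from $T_1$ and leaves $\gamma(T_1)\ge\ell$, so the identity $\gamma(T_1)+\sum_{i\ge 2}\gamma(T_i-r_i)=|V(G)|$ combined with the lower bound $\ell-1$ on every $i\ge 2$ summand still forces $m\le|V(G)|/(\ell-1)$.

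For slickness of the quotient, I need only check the edges of $F$. Fix an $F$-edge $(\alpha,i)$ with $\alpha$ the parent of $i$ and let $v\in C_i\cap C_\alpha$. The $T$-subtree of $v$ is connected and meets both $T_i-r_i$ and some bag of $T_\alpha$; since $T_i$ attaches to the rest of $T$ only at $r_i$, the subtree traverses $r_i$, so $v\in B_{r_i}$ and in addition $v\in B_c$ for some $T_i$-child $c$ of $r_i$. Slickness of the original decomposition at the $T$-edge $(r_i,c)$ produces a neighbour $u\in(N_G(v)\cap B_c)\setminus B_{r_i}$. Then $u\in C_i$ since $c\in T_i-r_i$, while $u\notin C_\alpha$ because the $T$-subtree of $u$ contains $c$ but not $r_i$ and is thus confined to the $T$-descendants of $c$, which are disjoint from $T_\alpha$ by the division structure.

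The main technical obstacle is the root piece: \cref{PartitionWeightedTree} only guarantees $\gamma(T_1)\le 5k+2$, so the extra splitting of $T_1$ and the careful count that keeps $m\le|V(G)|/(\ell-1)$ afterwards is the only step that needs more than a routine verification.
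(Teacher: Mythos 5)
Your approach is the same as the paper's (weight $T$ by bag differences, apply \cref{PartitionWeightedTree}, pass to the quotient via \cref{QuotientTreeDecomp}, and check slickness on $F$-edges). But you have put your finger on a genuine gap in the paper's own proof, not just in your own write-up. The paper claims ``$|C_i|\leq(\ell-1)+\gamma(T_i-r_i)\leq 4\ell-6$'' for \emph{each} $i\in V(F)$, yet \cref{PartitionWeightedTree} only guarantees $\gamma(T_i-r_i)\leq 3\ell-5$ for $i\geq 2$; for $i=1$ it only gives $\gamma(T_1)\leq 5\ell+2$, and since $|C_1|=\gamma(T_1)$ this allows $|C_1|$ to exceed $4\ell-6$ (indeed $5\ell+2>4\ell-6$ for every $\ell$). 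Tracing the proof of \cref{PartitionWeightedTree}, the base case returns $T_1:=T$ with $\gamma(T)$ as large as $5\ell+2$, so there is no hidden bound on $\gamma(T_1-r_1)$; the gap is real and propagates the constants in \cref{SlickAndSmall,SpreadAndSmall}.

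Your proposed repair is sound in outline: iteratively apply \cref{FindWeightedSubtreePlus} to the root piece $T_1$ while $\gamma(T_1)>4\ell-6$. Each application removes weight in $\{\ell-1,\dots,3\ell-5\}$, so the removed pieces obey the same $\gamma(T_i-r_i)$ bound as the others, and the remaining $T_1$ satisfies $\gamma(T_1)\geq(4\ell-5)-(3\ell-5)=\ell$; hence when the loop terminates $\gamma(T_1)\in\{\ell,\dots,4\ell-6\}$, giving $|C_1|\leq 4\ell-6$. Two points that deserve to be written out explicitly: (i) after each split one must reinsert the new piece immediately after $T_1$ in the division order (so the sequence is $T_1'',T',T_2,\dots$), which is needed for the third division property and ensures \cref{QuotientTreeDecomp} still applies; and (ii) the order bound $m\leq|V(G)|/(\ell-1)$ follows from $\gamma(T_1)\geq\ell-1$ together with $\gamma(T_i-r_i)\geq\ell-1$ for $i\geq 2$ and the identity $\gamma(T_1)+\sum_{i\geq2}\gamma(T_i-r_i)=\gamma(T)=|V(G)|$, exactly as you say. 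So your proof, once these details are filled in, is correct and slightly more careful than the paper's.
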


\begin{proof}
Let $r$ be the root of $T$. 
Weight $T$ as follows. 
Let $\gamma(r):=|B_r|$. 
For each edge $xy$ in $T$ with $x$ the parent of $y$, 
let $\gamma(y):=|B_y\setminus B_x|$.
If $\gamma(y)=0$ then $B_y\subseteq B_x$, contradicting the slick property for any $v\in B_y$ (since we may assume that $B_y\neq\emptyset$).
So $\gamma(y)\geq 1$ and $\gamma(y)\leq|B_y|\leq \ell-1$. 
Note that $\gamma(T)=|V(G)|\geq 2\ell-2$.

By \cref{PartitionWeightedTree}, $T$ has a division $(T_1,\dots,T_m)$ such that:
\begin{itemize}
    \item $m\leq \frac{\gamma(T)}{\ell-1} = \frac{|V(G)|}{\ell-1}$, 
    and 
    \item for each $i\in\{2,\dots,m\}$, 
    if $r_i$ is the root of $T_i$, then $\gamma(T_i-r_i)\in\{\ell-1,\dots,3\ell-5\}$.
\end{itemize}

By \cref{QuotientTreeDecomp}, the quotient 
 $(C_i:i\in V(F))$ of $(B_x:x\in V(T))$ with respect to $(T_1,\dots,T_m)$ is a tree-decomposition of $G$. So $|V(F)|=m\leq \frac{|V(G)|}{\ell-1}$, as desired. For each $i\in V(F)$, $C_i$ is contained in the union of $B_{r_i}$ and the union of $B_y\setminus B_x$ taken over the edges $xy\in E(T_i)$ with $x$ the parent of $y$. So $|C_i|\leq (\ell-1)+\gamma(T_i-r_i)\leq 4\ell-6$, and  $(C_i:i\in V(F))$ has width at most $4\ell-7$. 

It remains to show that $(C_i:i\in V(F))$  is slick.
Consider an edge $\alpha i\in E(F)$ where $\alpha$ is the parent of $i$. Consider $v\in C_i\cap C_\alpha$. By construction, $v\in B_{r_i}$ and $v$ is in some other bag $B_y$ with $y$ a non-root node of $T_i$. Thus $v$ is in $B_y$ for some child $y$ of $r_i$. Since $(B_x:x\in V(T))$ is slick, $v$ has a neighbour $w$ in $B_y\setminus B_{r_i}$. So $w\in C_i\setminus C_\alpha$. Hence  $(C_i:i\in V(F))$  is slick.
\end{proof}

The next theorem and \cref{SlickSpread} imply \cref{SpreadAndSmall}.

\begin{thm}
\label{SlickAndSmall}
For any graph $G$ and integer $k\geq\tw(G)$, $G$ has a slick tree-decomposition with width at most $56k+53$ and order at most $\max\{\frac{|V(G)|}{14k+14},1\}$.
\end{thm}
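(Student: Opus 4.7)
The plan is to compose the two main tools already established: apply \cref{SlickMain} to produce a slick tree-decomposition of width at most $14k+13$, and then feed this into \cref{MakeSmall} to compress the order while only modestly inflating the width.

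First, I would dispatch the small-$|V(G)|$ cases, since \cref{MakeSmall} requires $|V(G)| \ge 2\ell - 2$. If $|V(G)| \le 14k+14$, then $\max\{\frac{|V(G)|}{14k+14},1\} = 1$, so the single-bag decomposition with bag $V(G)$ is vacuously slick, has order $1$, and has width $|V(G)|-1 \le 14k+13 \le 56k+58$. If $14k+15 \le |V(G)| \le 28k+27$, the single-bag decomposition still suffices: its order is $1 \le \frac{|V(G)|}{14k+14}$ and its width is at most $28k+26 \le 56k+58$.

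In the remaining case $|V(G)| \ge 28k+28$, apply \cref{SlickMain} to obtain a slick tree-decomposition of $G$ with width at most $14k+13$. Setting $\ell := 14k+15$, the width is at most $\ell-2$ and $|V(G)| \ge 2\ell - 2$, so \cref{MakeSmall} applies. It yields a slick tree-decomposition of $G$ with width at most $4\ell - 7 = 56k + 53 \le 56k+58$ and order at most $\frac{|V(G)|}{\ell - 1} = \frac{|V(G)|}{14k+14}$, as desired.

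I expect no real obstacle here: both \cref{SlickMain} and \cref{MakeSmall} explicitly produce slick tree-decompositions, and the numerics line up with a bit of slack. The only thing to keep track of is the small-$|V(G)|$ boundary where \cref{MakeSmall} cannot be invoked directly, which is handled by the trivial single-bag decomposition. Combined with \cref{SlickSpread}, \cref{SlickAndSmall} implies \cref{SpreadAndSmall}.
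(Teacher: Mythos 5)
Your proof is correct and follows essentially the same approach as the paper: apply \cref{SlickMain}, then feed the result into \cref{MakeSmall} with $\ell = 14k+15$, handling small graphs ($|V(G)| \le 2\ell-3$) by the trivial single-bag decomposition. A small note: you correctly compute $4\ell - 7 = 56k+53$, whereas the paper's own proof writes $4\ell-7 = 56k+58$, which is a minor arithmetic slip; this is harmless, since the theorem's stated bound of $56k+58$ is simply weaker than what the argument actually delivers.
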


\begin{proof}
Let $\ell:=14k+15$. By \cref{SlickMain}, $G$ has a slick tree-decomposition with width at most $14k+13=\ell-2$. If  $|V(G)|\leq 2\ell-3$ then the tree-decomposition with one bag $V(G)$ satisfies the claim. Now assume that $|V(G)|\geq 2\ell-2$. By \cref{MakeSmall}, $G$ has a slick tree-decomposition with width at most $4\ell-7=56k+53$ and order at most $\frac{|V(G)|}{\ell-1}=\frac{|V(G)|}{14k+14}$.
\end{proof}

\section{Open Problems}

We conclude with some open problems:

\begin{enumerate}[(Q1)]

\item What is the infimum of the $c\in\mathbb{R}$ such that for some $c'\in\mathbb{R}$, every graph $G$ with treewidth $k$  has a tree-decomposition with width at most $(c+o(1))k$, in which each vertex $v\in V(G)$ has spread at most $c'(\deg(v)+1)$? \cref{Spread} says the answer is at most 14.

\item We expect that $n\times n$ grid graphs imply that the answer to (Q1) is at least 2. In particular, I conjecture there no constants $\varepsilon,c>0$ such that every $n\times n$ grid graph has a tree-decomposition with width at most $(2-\varepsilon)n$ and spread at most $c$. I also conjecture that every optimal tree-decomposition of the $n\times n$ grid has very large spread. In particular, in every tree-decomposition of the $n\times n$ grid with width $n$, some vertex has spread $\Omega(n)$. 

\item What is the infimum of the $c\in\mathbb{R}$ such that for some $c'\in\mathbb{R}$, every graph $G$ has a tree-decomposition with width at most  $c'(\tw(G)+1)$ and average spread at most $c$? \cref{Small} says the answer is at most 3. 

\end{enumerate}

{\fontsize{10pt}{11pt}\selectfont

\begin{thebibliography}{47}
\providecommand{\natexlab}[1]{#1}
\providecommand{\msn}[1]{MR:\,\href{http://www.ams.org/mathscinet-getitem?mr=MR{#1}}{#1}}
\providecommand{\ZBL}[1]{Zbl:\,\href{https://www.zentralblatt-math.org/zmath/en/search/?q=an:#1}{#1}}
\providecommand{\url}[1]{\texttt{#1}}
\providecommand{\urlprefix}{}
\expandafter\ifx\csname urlstyle\endcsname\relax
  \providecommand{\doi}[1]{doi:\discretionary{}{}{}#1}\else
  \providecommand{\doi}{doi:\discretionary{}{}{}\begingroup \urlstyle{rm}\Url}\fi

\bibitem[{Abrishami et~al.(2024)Abrishami, Alecu, Chudnovsky, Hajebi, Spirkl, and Vu\v{s}kovi\'{c}}]{AACHSV24}
\textsc{Tara Abrishami, Bogdan Alecu, Maria Chudnovsky, Sepehr Hajebi, Sophie Spirkl, and Kristina Vu\v{s}kovi\'{c}}.
\newblock \href{https://doi.org/10.1002/jgt.23055}{Induced subgraphs and tree decompositions {V}. {O}ne neighbor in a hole}.
\newblock \emph{J. Graph Theory}, 105(4):542--561, 2024.

\bibitem[{Adler(2006)}]{Adler06}
\textsc{Isolde Adler}.
\newblock \href{https://freidok.uni-freiburg.de/data/2468}{Width functions for hypertree decompositions}.
\newblock Ph.D. thesis, Univ. Freiburg, 2006.

\bibitem[{Alon et~al.(1990)Alon, Seymour, and Thomas}]{AST90}
\textsc{Noga Alon, Paul Seymour, and Robin Thomas}.
\newblock \href{https://doi.org/10.2307/1990903}{A separator theorem for nonplanar graphs}.
\newblock \emph{J. Amer. Math. Soc.}, 3(4):801--808, 1990.

\bibitem[{Antony(2020)}]{Antony20}
\textsc{Cyriac Antony}.
\newblock \href{https://cstheory.stackexchange.com/q/47688}{Upperbound for max degree of $k$-tree completion}.
\newblock 2020.
\newblock {cstheory.stackexchange:47688}.

\bibitem[{Barrera-Cruz et~al.(2019)Barrera-Cruz, Felsner, M\'{e}sz\'{a}ros, Micek, Smith, Taylor, and Trotter}]{BFMMSTT19}
\textsc{Fidel Barrera-Cruz, Stefan Felsner, Tam\'{a}s M\'{e}sz\'{a}ros, Piotr Micek, Heather Smith, Libby Taylor, and William~T. Trotter}.
\newblock \href{https://doi.org/10.1016/j.jctb.2019.02.003}{Separating tree-chromatic number from path-chromatic number}.
\newblock \emph{J. Combin. Theory Ser. B}, 138:206--218, 2019.

\bibitem[{Berger and Seymour(2024)}]{BS24}
\textsc{Eli Berger and Paul Seymour}.
\newblock \href{https://doi.org/10.1007/s00493-024-00088-1}{Bounded-diameter tree-decompositions}.
\newblock \emph{Combinatorica}, 44(3):659--674, 2024.

\bibitem[{Bertelè and Brioschi(1972)}]{BB72}
\textsc{Umberto Bertelè and Francesco Brioschi}.
\newblock Nonserial dynamic programming.
\newblock Academic Press, 1972.

\bibitem[{Bodlaender(1998)}]{Bodlaender98}
\textsc{Hans~L. Bodlaender}.
\newblock \href{https://doi.org/10.1016/S0304-3975(97)00228-4}{A partial $k$-arboretum of graphs with bounded treewidth}.
\newblock \emph{Theoret. Comput. Sci.}, 209(1-2):1--45, 1998.

\bibitem[{Bodlaender(1999)}]{Bodlaender-DMTCS99}
\textsc{Hans~L. Bodlaender}.
\newblock \href{https://dmtcs.episciences.org/256}{A note on domino treewidth}.
\newblock \emph{Discrete Math. Theoret. Comput. Sci.}, 3(4):141--150, 1999.

\bibitem[{Bodlaender and Engelfriet(1997)}]{BodEng-JAlg97}
\textsc{Hans~L. Bodlaender and Joost Engelfriet}.
\newblock \href{https://doi.org/10.1006/jagm.1996.0854}{Domino treewidth}.
\newblock \emph{J. Algorithms}, 24(1):94--123, 1997.

\bibitem[{Bodlaender and Groenland(2026)}]{BG26}
\textsc{Hans~L. Bodlaender and Carla Groenland}.
\newblock \href{https://arxiv.org/abs/2601.04040}{Trade-off between spread and width for tree decompositions}.
\newblock 2026, arXiv:2601.04040.

\bibitem[{Cames~van Batenburg et~al.(2019)Cames~van Batenburg, Huynh, Joret, and Raymond}]{CvBHJR19}
\textsc{Wouter Cames~van Batenburg, Tony Huynh, Gwena\"el Joret, and Jean-Florent Raymond}.
\newblock \href{https://arxiv.org/abs/1807.04969}{A tight {E}rd{\H{o}}s-{P}\'osa function for planar minors}.
\newblock \emph{Adv. Comb.}, \#2, 2019.

\bibitem[{Campbell et~al.(2024)Campbell, Davies, Distel, Frederickson, Gollin, Hendrey, Hickingbotham, Wiederrecht, Wood, and Yepremyan}]{CDDFGHHWWY}
\textsc{Rutger Campbell, James Davies, Marc Distel, Bryce Frederickson, J.~Pascal Gollin, Kevin Hendrey, Robert Hickingbotham, Sebastian Wiederrecht, David~R. Wood, and Liana Yepremyan}.
\newblock \href{https://arxiv.org/abs/2410.19295}{Treewidth, {H}adwiger number, and induced minors}.
\newblock 2024, arXiv:2410.19295.

\bibitem[{Coudert et~al.(2016)Coudert, Ducoffe, and Nisse}]{CDN16}
\textsc{David Coudert, Guillaume Ducoffe, and Nicolas Nisse}.
\newblock \href{https://doi.org/10.1137/15M1034039}{To approximate treewidth, use treelength!}
\newblock \emph{{SIAM} J. Discrete Math.}, 30(3):1424--1436, 2016.

\bibitem[{Courcelle(1990)}]{Courcelle90}
\textsc{Bruno Courcelle}.
\newblock \href{https://doi.org/10.1016/0890-5401(90)90043-H}{The monadic second-order logic of graphs. {I}. {R}ecognizable sets of finite graphs}.
\newblock \emph{Inform. and Comput.}, 85(1):12--75, 1990.

\bibitem[{Dallard et~al.(2024{\natexlab{a}})Dallard, Fomin, Golovach, Korhonen, and Milani\v{c}}]{DFGKM24}
\textsc{Cl{\'{e}}ment Dallard, Fedor~V. Fomin, Petr~A. Golovach, Tuukka Korhonen, and Martin Milani\v{c}}.
\newblock \href{https://doi.org/10.4230/LIPIcs.ICALP.2024.51}{Computing tree decompositions with small independence number}.
\newblock In \emph{Proc. 51st Int'l Coll. Automata, Languages, and Programming \textup{(ICALP '24)}}, vol. 297 of \emph{LIPIcs}, pp. 51:1--51:18. Schloss Dagstuhl, 2024{\natexlab{a}}.

\bibitem[{Dallard et~al.(2021)Dallard, Milani\v{c}, and \v{S}torgel}]{DMS21}
\textsc{Cl\'{e}ment Dallard, Martin Milani\v{c}, and Kenny \v{S}torgel}.
\newblock \href{https://doi.org/10.1137/20M1352119}{Treewidth versus clique number. {I}. {G}raph classes with a forbidden structure}.
\newblock \emph{SIAM J. Discrete Math.}, 35(4):2618--2646, 2021.

\bibitem[{Dallard et~al.(2024{\natexlab{b}})Dallard, Milani\v{c}, and \v{S}torgel}]{DMS24a}
\textsc{Cl\'{e}ment Dallard, Martin Milani\v{c}, and Kenny \v{S}torgel}.
\newblock \href{https://doi.org/10.1016/j.jctb.2023.10.006}{Treewidth versus clique number. {II}. {T}ree-independence number}.
\newblock \emph{J. Combin. Theory Ser. B}, 164:404--442, 2024{\natexlab{b}}.

\bibitem[{Dallard et~al.(2024{\natexlab{c}})Dallard, Milani\v{c}, and \v{S}torgel}]{DMS24b}
\textsc{Cl\'{e}ment Dallard, Martin Milani\v{c}, and Kenny \v{S}torgel}.
\newblock \href{https://doi.org/10.1016/j.jctb.2024.03.005}{Treewidth versus clique number. {III}. {T}ree-independence number of graphs with a forbidden structure}.
\newblock \emph{J. Combin. Theory Ser. B}, 167:338--391, 2024{\natexlab{c}}.

\bibitem[{Dallard et~al.(2024{\natexlab{d}})Dallard, Krnc, Kwon, Milani\v{c}, Munaro, \v{S}torgel, and Wiederrecht}]{DKKMMSW24}
\textsc{Clément Dallard, Matjaž Krnc, O-joung Kwon, Martin Milani\v{c}, Andrea Munaro, Kenny \v{S}torgel, and Sebastian Wiederrecht}.
\newblock \href{http://arxiv.org/abs/2402.11222}{Treewidth versus clique number. {IV.} {T}ree-independence number of graphs excluding an induced star}.
\newblock 2024{\natexlab{d}}, arXiv:2402.11222.

\bibitem[{Diestel(2018)}]{Diestel5}
\textsc{Reinhard Diestel}.
\newblock Graph theory, vol. 173 of \emph{Graduate Texts in Mathematics}.
\newblock Springer, 5th edn., 2018.

\bibitem[{Ding and Oporowski(1995)}]{DO95}
\textsc{Guoli Ding and Bogdan Oporowski}.
\newblock \href{https://doi.org/10.1002/jgt.3190200412}{Some results on tree decomposition of graphs}.
\newblock \emph{J. Graph Theory}, 20(4):481--499, 1995.

\bibitem[{Distel et~al.(2026)Distel, Kaul, Kaul, and Wood}]{DKKW}
\textsc{Marc Distel, Neel Kaul, Raj Kaul, and David~R. Wood}.
\newblock \href{https://arxiv.org/abs/2604.05690}{Tree-partitions and small-spread tree-decompositions}.
\newblock 2026, arXiv:2604.05690.

\bibitem[{Dourisboure and Gavoille(2007)}]{DG07}
\textsc{Yon Dourisboure and Cyril Gavoille}.
\newblock \href{https://doi.org/10.1016/j.disc.2005.12.060}{Tree-decompositions with bags of small diameter}.
\newblock \emph{Discrete Math.}, 307(16):2008--2029, 2007.

\bibitem[{Dujmovi{\'c} et~al.(2017)Dujmovi{\'c}, Morin, and Wood}]{DMW17}
\textsc{Vida Dujmovi{\'c}, Pat Morin, and David~R. Wood}.
\newblock \href{https://doi.org/10.1016/j.jctb.2017.05.006}{Layered separators in minor-closed graph classes with applications}.
\newblock \emph{J. Combin. Theory Ser. B}, 127:111--147, 2017.
\newblock arXiv:1306.1595.

\bibitem[{Dvo\v{r}\'{a}k and Norin(2019)}]{DN19}
\textsc{Zden\v{e}k Dvo\v{r}\'{a}k and Sergey Norin}.
\newblock \href{https://doi.org/10.1016/j.jctb.2018.12.007}{Treewidth of graphs with balanced separations}.
\newblock \emph{J. Combin. Theory Ser. B}, 137:137--144, 2019.

\bibitem[{Dvo\v{r}\'{a}k and Wood(2025)}]{DvoWoo}
\textsc{Zden\v{e}k Dvo\v{r}\'{a}k and David~R. Wood}.
\newblock \href{https://doi.org/10.5802/igt.10}{Product structure of graph classes with strongly sublinear separators}.
\newblock \emph{Innovations in Graph Theory}, 2:191--222, 2025.

\bibitem[{Halin(1976)}]{Halin76}
\textsc{Rudolf Halin}.
\newblock \href{https://doi.org/10.1007/BF01917434}{{$S$}-functions for graphs}.
\newblock \emph{J. Geometry}, 8(1-2):171--186, 1976.

\bibitem[{Harvey and Wood(2017)}]{HW17}
\textsc{Daniel~J. Harvey and David~R. Wood}.
\newblock \href{https://doi.org/10.1002/jgt.22030}{Parameters tied to treewidth}.
\newblock \emph{J. Graph Theory}, 84(4):364--385, 2017.

\bibitem[{Heggernes(2006)}]{Heggernes06}
\textsc{Pinar Heggernes}.
\newblock \href{https://doi.org/10.1016/j.disc.2005.12.003}{Minimal triangulations of graphs: a survey}.
\newblock \emph{Discrete Math.}, 306(3):297--317, 2006.

\bibitem[{Huynh and Kim(2017)}]{HK17}
\textsc{Tony Huynh and Ringi Kim}.
\newblock \href{https://doi.org/10.1002/jgt.22121}{Tree-chromatic number is not equal to path-chromatic number}.
\newblock \emph{J. Graph Theory}, 86(2):213--222, 2017.

\bibitem[{Huynh et~al.(2021)Huynh, Reed, Wood, and Yepremyan}]{HRWY21}
\textsc{Tony Huynh, Bruce Reed, David~R. Wood, and Liana Yepremyan}.
\newblock \href{https://doi.org/10.1007/978-3-030-62497-2_30}{Notes on tree- and path-chromatic number}.
\newblock In \emph{2019--20 {MATRIX} Annals}, vol.~4 of \emph{MATRIX Book Ser.}, pp. 489--498. Springer, 2021.

\bibitem[{Kaul et~al.(2025)Kaul, Kim, Kim, and Wood}]{KKKW}
\textsc{Neel Kaul, Jaehoon Kim, Minseo Kim, and David~R. Wood}.
\newblock \href{https://arxiv.org/abs/2511.22358}{On universal graphs for trees and tree-like graphs}.
\newblock 2025, arXiv:2511.22358.

\bibitem[{Liu et~al.(2024)Liu, Norin, and Wood}]{LNW}
\textsc{Chun-Hung Liu, Sergey Norin, and David~R. Wood}.
\newblock \href{https://arxiv.org/abs/2410.20333}{Product structure and tree decompositions}.
\newblock 2024, arXiv:2410.20333.

\bibitem[{Lokshtanov(2010)}]{Lokshtanov10}
\textsc{Daniel Lokshtanov}.
\newblock \href{https://doi.org/10.1016/j.dam.2009.10.007}{On the complexity of computing treelength}.
\newblock \emph{Discret. Appl. Math.}, 158(7):820--827, 2010.

\bibitem[{Milani\v{c} and Rz\k{a}\.{z}ewski(2022)}]{MR22}
\textsc{Martin Milani\v{c} and Pawe\l{} Rz\k{a}\.{z}ewski}.
\newblock \href{http://arxiv.org/abs/2209.12315}{Tree decompositions with bounded independence number: beyond independent sets}.
\newblock 2022, arXiv:2209.12315.

\bibitem[{Norin(2026)}]{Norin26}
\textsc{Sergey Norin}.
\newblock Grid spread.
\newblock 2026.
\newblock Manuscript.

\bibitem[{Pach et~al.(1999)Pach, Thiele, and T\'{o}th}]{PTT99}
\textsc{J\'{a}nos Pach, Torsten Thiele, and G\'{e}za T\'{o}th}.
\newblock Three-dimensional grid drawings of graphs.
\newblock In \textsc{Bernard Chazelle, Jacob~E. Goodman, and Richard Pollack}, eds., \emph{Advances in discrete and computational geometry}, vol. 223 of \emph{Contemporary Math.}, pp. 251--255. Amer. Math. Soc., 1999.

\bibitem[{Reed(1999)}]{Reed99a}
\textsc{Bruce Reed}.
\newblock \href{https://doi.org/10.1007/s004930050056}{Mangoes and blueberries}.
\newblock \emph{Combinatorica}, 19(2):267--296, 1999.

\bibitem[{Reed(1997)}]{Reed97}
\textsc{Bruce~A. Reed}.
\newblock \href{https://doi.org/10.1017/CBO9780511662119.006}{Tree width and tangles: a new connectivity measure and some applications}.
\newblock In \textsc{R.~A. Bailey}, ed., \emph{Surveys in Combinatorics}, vol. 241 of \emph{London Math. Soc. Lecture Note Ser.}, pp. 87--162. Cambridge Univ. Press, 1997.

\bibitem[{Robertson and Seymour(1986{\natexlab{a}})}]{RS-II}
\textsc{Neil Robertson and Paul Seymour}.
\newblock \href{https://doi.org/10.1016/0196-6774(86)90023-4}{Graph minors. {II}. {A}lgorithmic aspects of tree-width}.
\newblock \emph{J. Algorithms}, 7(3):309--322, 1986{\natexlab{a}}.

\bibitem[{Robertson and Seymour(1986{\natexlab{b}})}]{RS-V}
\textsc{Neil Robertson and Paul Seymour}.
\newblock \href{https://doi.org/10.1016/0095-8956(86)90030-4}{Graph minors. {V}. {E}xcluding a planar graph}.
\newblock \emph{J. Combin. Theory Ser. B}, 41(1):92--114, 1986{\natexlab{b}}.

\bibitem[{Robertson and Seymour(1991)}]{RS-X}
\textsc{Neil Robertson and Paul Seymour}.
\newblock \href{https://doi.org/10.1016/0095-8956(91)90061-N}{Graph minors. {X}. {O}bstructions to tree-decomposition}.
\newblock \emph{J. Combin. Theory Ser. B}, 52(2):153--190, 1991.

\bibitem[{Seymour(2016)}]{Seymour16}
\textsc{Paul Seymour}.
\newblock \href{https://doi.org/10.1016/j.jctb.2015.08.002}{Tree-chromatic number}.
\newblock \emph{J. Combin. Theory Series B}, 116:229--237, 2016.

\bibitem[{Seymour and Thomas(1993)}]{ST93}
\textsc{Paul Seymour and Robin Thomas}.
\newblock \href{https://doi.org/10.1006/jctb.1993.1027}{Graph searching and a min-max theorem for tree-width}.
\newblock \emph{J. Combin. Theory Ser. B}, 58(1):22--33, 1993.

\bibitem[{Thomassen(1988)}]{Thomassen88a}
\textsc{Carsten Thomassen}.
\newblock \href{https://doi.org/10.1002/jgt.3190120111}{On the presence of disjoint subgraphs of a specified type}.
\newblock \emph{J. Graph Theory}, 12(1):101--111, 1988.

\bibitem[{Wood(2025)}]{Wood25}
\textsc{David~R. Wood}.
\newblock \href{https://arxiv.org/abs/2509.01140}{Tree decompositions with small width, spread, order and degree}.
\newblock 2025, arXiv:2509.01140.

\end{thebibliography}
\def\soft#1{\leavevmode\setbox0=\hbox{h}\dimen7=\ht0\advance \dimen7 by-1ex\relax\if t#1\relax\rlap{\raise.6\dimen7 \hbox{\kern.3ex\char'47}}#1\relax\else\if T#1\relax \rlap{\raise.5\dimen7\hbox{\kern1.3ex\char'47}}#1\relax \else\if d#1\relax\rlap{\raise.5\dimen7\hbox{\kern.9ex \char'47}}#1\relax\else\if D#1\relax\rlap{\raise.5\dimen7 \hbox{\kern1.4ex\char'47}}#1\relax\else\if l#1\relax \rlap{\raise.5\dimen7\hbox{\kern.4ex\char'47}}#1\relax \else\if L#1\relax\rlap{\raise.5\dimen7\hbox{\kern.7ex \char'47}}#1\relax\else\message{accent \string\soft \space #1 not defined!}#1\relax\fi\fi\fi\fi\fi\fi}

}

\appendix
\section{Follow-up Work}

Following the original release of this paper \citep{Wood25}:
\begin{itemize}
\item \citet{KKKW} used the bound on the order of tree-decompositions in  \cref{Small} as a key ingredient in their construction of a universal graph for the class of $n$-vertex graphs with treewidth $k$.
\item \citet{BG26} made significant progress on Q1, showing that $c\in[2,3]$. 
\item \citet{BG26} disproved the conjecture in Q2, by showing that for any $m \geq n \geq 1$ and $\epsilon > 0$, the $(n \times m)$-grid has a tree-decomposition with width at most $(1+\epsilon)n$ in which each vertex has spread $O(1/\epsilon)$. On the other hand, \citet{Norin26} proved a lower bound in the direction of the conjecture in Q2. Namely, there exists $\delta>0$ such that for all integers $n,a \geq 1$ every tree-decomposition of the $(n\times n)$-grid with width $n+a-1$ has a vertex of spread at least $\delta n/a$. 
\item \citet{BG26} answered Q3 by showing that the answer is $c=1$. 
\item \citet{DKKW} improved the  best known upper bound on the domino treewidth to $(k+1)(\Delta+1)(8\Delta-3)-1$ (slightly improving on the result by  \citet{Bodlaender-DMTCS99} mentioned in the introduction).
\item The author recently noticed the following question of \citet{Antony20}: does every graph with treewidth $k$ and maximum degree $\Delta$ have a chordal completion with treewidth at most $f(k,\Delta)$ and maximum degree at most $g(k,\Delta)$ for some functions $f,g$? \cref{Completion} says the answer is `yes' with $f(k)\in O(k)$ and $g(k,\Delta)\in O(k\Delta)$. Up to the constant factors, these functions are best possible, since obviously $f(k)\geq k$ and \citet{DKKW} recently showed that $g(k,\Delta)\in\Omega(k\Delta)$. Moreover, \citet{DKKW} showed for $k\geq 3$ that $f(k)\geq 2k-2$ regardless of $g(k,\Delta)$. So the $O(k)$ bound on $\omega(G')$ in \cref{Completion} cannot be replaced by $2k-3$. This implies that in any result that bounds the spread by a function of the maximum degree (as in \cref{Spread}), the width bound must be at least $2k-2$. This provides further motivation for considering near-optimal width in \cref{Spread}. 
\end{itemize}
\end{document}